\newcommand{\1}{\mathbbm{1}}
\newcommand{\0}{\mathbb{0}}
\newtheorem{theorem}{Theorem}
\newtheorem{lemma}{Lemma}
\newtheorem{remark}{Remark}
\providecommand{\keywords}[1]
{
	\small	
	\textbf{\textit{Keywords---}} #1
}
\title{On the rank of the distance matrix of graphs}
\date{}
\author[1,2]{Ezequiel Dratman\thanks{edratman@campus.ungs.edu.ar}}
\author[1,2]{Luciano N. Grippo\thanks{lgrippo@campus.ungs.edu.ar}}
\author[1]{Ver{\'o}nica Moyano\thanks{vmoyano@campus.ungs.edu.ar}}
\author[1,3]{Adri{\'a}n Pastine\thanks{agpastine@unsl.edu.ar}}
\affil[1]{Consejo Nacional de Investigaciones Cient\'ificas y Tecnicas, Argentina.}
\affil[2]{Instituto de Ciencias\\Universidad Nacional de General Sarmiento, Argentina.}
\affil[3]{Instituto de Matem\'atica Aplicada San Luis, Universidad Nacional de San Luis, Argentina.}
\begin{document}

\maketitle

\begin{abstract}
		Let $G$ be a connected graph with $V(G)=\{v_1,\ldots,v_n\}$. The $(i,j)$-entry of the distance matrix $D(G)$ of $G$ is the distance between $v_i$ and $v_j$. In this article, using the well-known Ramsey's theorem, we prove that for each integer $k\ge 2$,  there is a finite amount of graphs whose distance matrices have rank $k$. We exhibit the list of graphs with distance matrices of rank $2$ and $3$. Besides, we study the rank of the distance matrices of graphs belonging to a family of graphs with their diameters at most two, the trivially perfect graphs.
		We show that for each $\eta\ge 1$ there exists a trivially perfect graph with nullity $\eta$. 
		We also show that for threshold graphs, which are a subfamily of the family of trivially perfect graphs, the nullity is bounded by one.       
\end{abstract}

\keywords{Distance Matrix, Distance Rank, Threshold Graph, Trivially Perfect Graph.}

\section{Introduction}

All graphs mentioned in this article are finite and have neither loops nor multiple edges. Let $G$ be a connected graph 
on $n$ vertices with vertex set $V =\{v_1,\dots,v_n\}$. The distance in $G$ between vertices $v_i$ and $v_j$ , denoted 
$d_G(v_i, v_j )$, is the number of edges of a shortest path linking $v_i$ and $v_j$. When the graph $G$ is clear
from the context we write $d(v_i,v_j)$.  The distance matrix of $G$, denoted 
$D(G)$, is the $n\times n$ symmetric matrix having its $(i,j)$-entry equal to $d(v_i,v_j )$. 
The distance matrix has attracted the attention of many researchers. The interest in this matrix 
was motivated by the connection with a communication problem (see~\cite{GrahamLovasz1978,GraphamandPollak1973} for more 
details). In an early article, Graham and Pollack \cite{GraphamandPollak1973} presented a remarkable result, proving that 
the determinant of the distance matrix of a tree $T$ on $n$ vertices only depends on $n$, being equal to 
$(-1)^{n-1}(n-1)2^{n-2}$. More recently, formulas for the determinat of connected graphs on $n$ vertices with $n$ edges
\cite{BapatKirklandandNeumann2005} (unicyclic graphs) and $n+1$ edges~\cite{Dratmanetal2021} (bicyclic graphs) have been 
computed. 

Determining the family of graphs with a given nullity for some associated matrix is a problem of interest for the 
graph-theoretic community. For instance, it is well-known that the nullity of the Laplacian matrix $L(G)$ of a given
graph $G$ coincides with the number of connected components of $G$ (see \cite{Merris1994}). Bo and Liu considered graphs 
whose adjacency matrix has rank two or three~\cite{ChengandLiu2007}; i.e., graphs with nullity $n-2$ and $n-3$, where 
$n$ is the number of vertices of the graph. Later, Cang et al. characterized graphs whose adjacency matrix has rank 
four~\cite{CHY2011} and five~\cite{CHY2012}. 

The remainder of this article is organized as follows. In Section~\ref{sec: general concepts}  we present some 
definitions and preliminary results.
Section~\ref{sec: distance rank of general graphs} is devoted to proving that for any integer $k\ge 2$, there exists a 
finite number of graphs with distance rank $k$. Section~\ref{sec: twins and null space} presents a collection of 
results in connection with the distance rank of a graph and a partition of its vertex set into sets of twins. In 
Section~\ref{sec: threshold graphs} we prove that the nullity of any threshold graph is  at most one, and we also 
present an infinite family of threshold graphs with nullity one. Finally, Section~\ref{sec: trivially perfect graphs} 
contains a sufficient condition for a trivially perfect graph to have a nonsingular distance matrix and a result that 
guarantees an example of a trivially perfect graph with nullity $\eta$, for each positive integer $\eta\ge 2$. In Section~\ref{sec: conclusions}, we close the article with some conclusions and open questions.

\section{General concepts}\label{sec: general concepts}

Let $G$ be a graph. We use $V(G)$ and $E(G)$ to denote the set of vertices of $G$ and the set of edges of $G$, respectively. We use $N_G(v)$ to denote the set of neighbors of a vertex $v\in V(G)$ and $N_G[v]=N_G(v)\cup\{v\}$, we omit the subscript in case the context is clear enough. A vertex $v$ is a \emph{universal vertex} if $N_G[v]=V(G)$. Let $S\subseteq V(G)$. We use $N_G(S)$ to denote the set of those vertices with at least one neighbor in $S$ and $N_G[S]=N_G(S)\cup S$, omitting the subscript in case the context is clear enough. Two vertices $u$ and $v$ are \emph{true twins} (resp. \emph{false twins}) if $N[u]=N[v]$ (resp. $N(u)=N(v)$). A vertex $v$ is \emph{universal} if $N[v]=V(G)$. Let $X\subseteq V(G)$. We use $G[X]$ to denote the subgraph of $G$ induced by $X$. A \emph{stable set} (or \emph{independent set}) of a graph is a set of pairwise nonadjacent vertices. By $\overline G$, we denote the \emph{complement graph} of $G$. The \emph{maximum independent number}, denoted $\alpha(G)$, is the cardinality of an independent set with the maximum number of vertices. A \emph{clique} is a set of pairwise adjacent vertices. A \emph{split graph} is a graph whose vertices can be partitioned into an
independent set and a clique. A \emph{complete graph} is a graph such that all its vertices are pairwise adjacent. We use $C_n$, $K_n$, $K_{1,n-1}$ and $P_n$ to denote the isomorphism classes of cycles, complete graphs, stars and paths, all of them on $n$ vertices, respectively. Let $\mathcal H$ be a set of graphs. A graph is said to be \emph{$\mathcal H$-free} if it does not contain any graph in $\mathcal H$ as an induced subgraph. In the case in which $\mathcal H=\{H\}$, we use \emph{$H$-free} for short. Let $G$ and $H$ be two graphs. We use $G+H$ (resp. $G\vee H$) to denote the disjoint union of $G$ and $H$ (resp. the joint between $G$ and $H$; i.e., $G+H$ plus all edges having an endpoint in $V(G)$ and the other one in $V(H)$). 

A \emph{cograph} is a $P_4$-free graph. If $G$ is a cograph, then $G$ or $\overline G$ is connected~\cite{Corneil81}. Thus, if $G$ is a connected cograph, then $G=H\vee J$, for two cographs $H$ and $J$. A graph is \emph{trivially perfect} if, for each induced subgraph, the maximum cardinality of an independent set agrees with the number of maximal cliques. Indeed, trivially perfect graphs are precisely the $\{P_4,C_4\}$-free graphs~\cite{Gol78}. In addition, a graph is trivially perfect if and only if every connected induced subgraph has a universal vertex (see~\cite{Chang96}). A graph is \emph{threshold} if it is $\{2K_2,P_4,C_4\}$-free. Observe that threshold graphs are precisely the split cographs. For more details about the graph classes described above, we refer the reader to~\cite{Golumbic2004}.

\section{Distance rank of general graphs}\label{sec: distance rank of general graphs}

The \emph{rank} of a graph $G$, denoted $\emph{rank}(G)$,	 is the rank of its adjacency matrix. For each integer $k\ge 2$ there exists an infinite family of graphs having rank $k$ (see~\cite{CHY2011}). The rank of $D(G)$, denoted $\emph{rank}_d(G)$, is called the \emph{distance rank} of $G$. Unlike what happpens with the rank of a graph, as a consequence of Ramsey's Theorem, for every integer $k\ge 2$ there exists a finite family of graphs having distance rank equal to $k$. Recall that given two integers $r,t\ge 2$ there exists a positive integer  $R(r,t)$, such that for every graph $G$ with $|V(G)|\ge R(r,t)$, $G$ contains either a clique with at least $r$ vertices or an independent set with at least $t$ vertices~\cite{Ramsey1929}. When $r=t$, $R(t)$ stands for $R(t,t)$. For bounds of $R(r,t)$ see for instance~\cite{Spencer1975}.

\subsection{General characteristic}

 Let $n\ge 2$. If $G=K_n$, clearly $n=\emph{rank}(G)=\emph{rank}_d(G)$. Besides, if $G$ is a tree on $n$ vertices, then $\emph{rank}_d(G)=n$~\cite{GraphamandPollak1973}, and thus $\emph{rank}_d(G)(K_{1,n-1})=\emph{rank}_d(G)(P_n)=n$.
Let $G$ and $H$ be two graphs. The graph $H$ is said to be an \emph{isometric subgraph} of $G$ if $H$ is a subgraph of $G$ such that $d_H(u,v)=d_G(u,v)$ for every $u,v\in V(H)$. We state the following immediate lemma without proof.

\begin{lemma}~\label{lem: isometric subgraphs}
	If $H$ is an isometric subgraph of $G$, then $\emph{rank}_d(H)\le \emph{rank}_d(G)$.
\end{lemma}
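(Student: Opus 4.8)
The plan is to observe that the hypothesis forces $D(H)$ to be, up to relabelling vertices, a principal submatrix of $D(G)$, and then to invoke the elementary linear-algebra fact that passing to a submatrix cannot increase the rank.

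First I would fix notation. Write $V(G)=\{v_1,\dots,v_n\}$ and, after relabelling if necessary, assume $V(H)=\{v_1,\dots,v_m\}$ with $m\le n$. Since $H$ is an isometric subgraph of $G$, we have $d_H(v_i,v_j)=d_G(v_i,v_j)$ for all $i,j\in\{1,\dots,m\}$; hence the $(i,j)$-entry of $D(H)$ coincides with the $(i,j)$-entry of $D(G)$ for every such pair. In other words, $D(H)$ is exactly the leading $m\times m$ principal submatrix of $D(G)$.

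Next I would establish the rank inequality in two elementary steps. Let $A$ be the $m\times n$ matrix obtained from $D(G)$ by deleting its last $n-m$ rows. Each row of $A$ is a row of $D(G)$, so the row space of $A$ is contained in the row space of $D(G)$, and therefore $\mathrm{rank}(A)\le \emph{rank}_d(G)$. Now $D(H)$ is obtained from $A$ by deleting its last $n-m$ columns, so each column of $D(H)$ is a column of $A$; hence the column space of $D(H)$ is contained in that of $A$, and $\emph{rank}_d(H)=\mathrm{rank}(D(H))\le \mathrm{rank}(A)$. Chaining these inequalities yields $\emph{rank}_d(H)\le \emph{rank}_d(G)$, as claimed.

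There is essentially no obstacle here, which is why the authors call the lemma immediate. The only point that deserves care is that the isometric hypothesis (rather than merely ``$H$ is an induced subgraph'') is precisely what legitimizes the entrywise identification above: in a general induced subgraph distances can strictly increase, so $D(H)$ need not agree entrywise with any submatrix of $D(G)$, and the argument would break down.
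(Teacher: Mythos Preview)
Your proof is correct and is exactly the argument the authors have in mind; the paper states this lemma without proof, calling it ``immediate.'' Your observation that $D(H)$ is a principal submatrix of $D(G)$, together with the standard submatrix-rank inequality, is precisely why the result is immediate.
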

 
The diameter of a graph $G$, denoted $\emph{diam}(G)$, is the maximum distance between two vertices. An induced path $P$ of $G$ on $\emph{diam}(G)+1$ vertices is called a \emph{diameter path}. By Lemma~\ref{lem: isometric subgraphs} and~\cite{GraphamandPollak1973}, since every graph contains a diameter path as an isometric subgraph, the lemma below follows.

\begin{lemma}~\label{lem: diameter lower bound}
	If $G$ is a connected graph, then $\emph{diam}(G)+1\le \emph{rank}_d(G)$.
\end{lemma}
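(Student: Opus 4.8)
The plan is to exhibit the number $\mathrm{diam}(G)+1$ as the distance rank of a suitably chosen isometric subgraph of $G$, and then to quote Lemma~\ref{lem: isometric subgraphs}. Concretely, I would choose a pair of vertices $u,v$ realizing the diameter, i.e.\ $d_G(u,v)=\mathrm{diam}(G)=:D$, and let $P$ be a shortest $u$--$v$ path in $G$; since $G$ is connected on at least two vertices, $D\ge 1$, so $P$ is a path on $D+1\ge 2$ vertices. Being a geodesic, $P$ is chordless --- a chord would provide a strictly shorter $u$--$v$ path --- hence $P$ is an induced path on $\mathrm{diam}(G)+1$ vertices, that is, a diameter path in the sense defined above.

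The only step requiring a genuine argument is that $P$ is an \emph{isometric} subgraph of $G$. Labelling the vertices of $P$ along the path as $u=w_0,w_1,\dots,w_D=v$, the inequality $d_G(w_i,w_j)\le |i-j|=d_P(w_i,w_j)$ is immediate. For the reverse inequality, were $d_G(w_i,w_j)<|i-j|$ to hold for some $i<j$, then splicing a geodesic from $w_i$ to $w_j$ between the initial segment $w_0\cdots w_i$ and the final segment $w_j\cdots w_D$ of $P$ would yield a $u$--$v$ walk of length $i+d_G(w_i,w_j)+(D-j)<D$, contradicting the choice of $u$ and $v$. (The same reasoning in fact shows that every geodesic of $G$ is an isometric subgraph, which is the fact alluded to just before this lemma.)

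Once isometry is in hand the conclusion is immediate: $P$ is a tree on $D+1$ vertices, so by the Graham--Pollak determinant formula~\cite{GraphamandPollak1973} its distance matrix is nonsingular, whence $\mathrm{rank}_d(P)=D+1$; Lemma~\ref{lem: isometric subgraphs} then yields $\mathrm{diam}(G)+1=\mathrm{rank}_d(P)\le \mathrm{rank}_d(G)$. The only mild obstacle in the whole argument is the verification of the isometry of $P$, which --- as indicated above --- comes down to the minimality of a shortest path.
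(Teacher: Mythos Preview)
Your proof is correct and follows essentially the same route as the paper: take a diameter path, observe it is an isometric subgraph, apply Graham--Pollak to get $\mathrm{rank}_d(P)=\mathrm{diam}(G)+1$, and invoke Lemma~\ref{lem: isometric subgraphs}. The paper simply asserts that a diameter path is isometric without justification, whereas you supply the (standard) splicing argument; otherwise the two arguments are identical.
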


It is well-known that the number of vertices of a graph $G$ is upper-bounded by a function on its maximum degree $\Delta(G)$ and $\emph{diam}(G)$.

\begin{lemma}~\cite[Exercise 2.1.60]{west2001}~\label{lem: upper bound diameter and delta}
	Let $G$ be a graph. If $\emph{diam}(G)=d$ and $\Delta(G)=r$, then
	\[|V(G)|\le\frac{1+[(r-1)^d-1]r}{r-2}=f(d,r)\]
\end{lemma}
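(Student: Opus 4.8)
The plan is to use a standard breadth-first-search layering argument, i.e.\ a version of the Moore bound. First I would fix an arbitrary vertex $v\in V(G)$ and, for $0\le i\le d$, set $L_i=\{u\in V(G):d_G(v,u)=i\}$. Since $G$ has diameter $d$, every vertex lies within distance $d$ of $v$, so $V(G)=L_0\cup L_1\cup\cdots\cup L_d$ is a disjoint partition, and it suffices to bound $\sum_{i=0}^{d}|L_i|$.

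Next I would bound each layer in turn. Trivially $|L_0|=1$ and $|L_1|=|N_G(v)|\le\Delta(G)=r$. For $i\ge 1$ I would double-count the edges between $L_i$ and $L_{i+1}$: on the one hand, every $u\in L_{i+1}$ is adjacent to the penultimate vertex of a shortest path from $v$ to $u$, which lies in $L_i$, so there are at least $|L_{i+1}|$ such edges; on the other hand, every $w\in L_i$ spends one of its at most $r$ incident edges reaching a vertex of $L_{i-1}$, hence contributes at most $r-1$ edges to $L_{i+1}$, for a total of at most $(r-1)|L_i|$. Comparing the two counts gives $|L_{i+1}|\le(r-1)|L_i|$ for all $i\ge 1$, and an immediate induction yields $|L_i|\le r\,(r-1)^{i-1}$ for every $i\ge 1$.

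Finally, summing over the layers and using $r\ge 3$ so that the geometric series collapses, I would obtain
\[
|V(G)|\;=\;1+\sum_{i=1}^{d}|L_i|\;\le\;1+r\sum_{j=0}^{d-1}(r-1)^j\;=\;1+\frac{r\bigl[(r-1)^d-1\bigr]}{r-2},
\]
which rearranges to $f(d,r)$, as claimed. The boundary cases $r\le 2$, for which $f(d,r)$ is undefined, would be dealt with separately and are degenerate: a connected graph with maximum degree at most $2$ is $K_1$, $K_2$, a path, or a cycle, and for each of these $|V(G)|$ is immediately bounded in terms of $d$. I do not expect a genuine obstacle; the only steps that need care are the double-counting --- in particular justifying the factor $r-1$ rather than $r$, i.e.\ that from the first layer onward each vertex must use an edge to step back toward $v$ --- and the routine evaluation of the geometric sum.
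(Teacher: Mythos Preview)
The paper does not actually supply a proof of this lemma: it is simply quoted as Exercise~2.1.60 from West's textbook and then used as a black box in the proof of Theorem~\ref{thm: finite number of graphs with rank k}. Your BFS-layering argument is exactly the standard derivation of the Moore bound that the exercise is asking for, and it is correct as written; there is nothing to compare on the level of strategy.

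One small arithmetical point is worth flagging. Your final expression is
\[
1+\frac{r\bigl[(r-1)^d-1\bigr]}{r-2}=\frac{(r-2)+r\bigl[(r-1)^d-1\bigr]}{r-2},
\]
whereas the paper records $f(d,r)=\dfrac{1+[(r-1)^d-1]r}{r-2}$. These two quantities are not equal in general (take $r=4$, $d=1$: yours gives $5$, the paper's formula gives $9/2$, and $K_5$ shows that $5$ is the correct bound). So your claim that the sum ``rearranges to $f(d,r)$'' is not literally true; rather, the displayed $f(d,r)$ in the paper appears to carry a typo --- the ``$1+$'' should sit outside the fraction --- and your expression is the correct Moore bound. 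Since the lemma is only used qualitatively (to get \emph{some} finite bound depending on $d$ and $r$), this discrepancy has no effect on the paper's applications, but you should state the bound you actually proved rather than asserting it equals the printed $f(d,r)$.
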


As a consequence of Ramsey's theorem we prove the main result of this section.

\begin{theorem}\label{thm: finite number of graphs with rank k}
	If $k$ is an integer with $k\ge 2$, then there is a finite number of connected graphs $G$ such that $\emph{rank}_d(G)=k$.
\end{theorem}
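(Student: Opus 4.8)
The plan is to combine the diameter lower bound (Lemma~\ref{lem: diameter lower bound}) with Ramsey's theorem to produce, for each fixed $k$, an absolute bound on the number of vertices of any connected graph $G$ with $\emph{rank}_d(G)=k$. Since there are only finitely many graphs on a bounded number of vertices, this yields the claim.

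First I would record two consequences of $\emph{rank}_d(G)=k$. By Lemma~\ref{lem: diameter lower bound}, $\emph{diam}(G)\le k-1$. Next I would control the maximum degree: if $v$ is a vertex with many neighbors, then among its neighbors Ramsey's theorem forces either a large clique or a large independent set; either way one finds a large set $S$ of neighbors of $v$ on which the induced subgraph is a clique or an independent set. I would then argue that such a configuration, sitting inside $G$ as an isometric subgraph (for a clique $K_m$ this is automatic; for an independent set $S\subseteq N(v)$ the graph $G[S\cup\{v\}]$ is a star $K_{1,m}$ which is isometric in $G$ since every two vertices of $S$ are at distance exactly $2$ through $v$), has large distance rank: $\emph{rank}_d(K_m)=m$ and $\emph{rank}_d(K_{1,m})=m+1$ by~\cite{GraphamandPollak1973}. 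By Lemma~\ref{lem: isometric subgraphs} this forces $k=\emph{rank}_d(G)\ge m$ (or $m+1$), so $m$ is bounded in terms of $k$; hence $\Delta(G) < R(k+1,k+1)$ (some explicit Ramsey-type bound suffices).

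With $\emph{diam}(G)\le k-1$ and $\Delta(G)\le r$ for an explicit $r=r(k)$ in hand, Lemma~\ref{lem: upper bound diameter and delta} gives $|V(G)|\le f(k-1, r(k))$, an explicit function of $k$ alone. Therefore every connected graph with distance rank $k$ has at most $f(k-1,r(k))$ vertices, and since only finitely many graphs have at most that many vertices, only finitely many can have distance rank $k$. I would also handle the trivial degenerate cases (e.g. $r\le 2$, where $f$ is not literally the displayed expression) by noting that maximum degree at most $2$ already forces $G$ to be a path or cycle, which is an even stronger finiteness statement.

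The main obstacle is the degree bound: one must make sure that a large monochromatic set among the neighbors of $v$ genuinely embeds as an \emph{isometric} subgraph whose distance matrix has large rank, so that Lemma~\ref{lem: isometric subgraphs} applies. The clique case is immediate; the independent-set case needs the observation that through the common neighbor $v$ the pairwise distances are all exactly $2$, so $K_{1,m}$ is isometric and contributes rank $m+1$. After that, the bookkeeping with the Ramsey number and with $f(d,r)$ is routine.
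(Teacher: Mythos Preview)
Your proposal is correct and follows essentially the same approach as the paper: bound the diameter via Lemma~\ref{lem: diameter lower bound}, bound the maximum degree via Ramsey's theorem applied inside a neighborhood (yielding an isometric $K_m$ or $K_{1,m}$), and finish with the Moore-type bound of Lemma~\ref{lem: upper bound diameter and delta}. The only cosmetic difference is that the paper includes the central vertex $v$ in the clique case as well, obtaining $K_{k+1}$ from a $k$-clique in $N(v)$ and hence using $R(k,k)$ rather than your $R(k+1,k+1)$; your extra care in verifying isometry of the star and in handling the degenerate case $\Delta(G)\le 2$ is not spelled out in the paper.
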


\begin{proof}
	Consider a connected graph $G$ such that $\emph{rank}_d(G)=k$. On the one hand if $\emph{diam}(G)\ge k$, by Lemma~\ref{lem: diameter lower bound}, $\emph{rank}_d(G)>k$. On the other hand, if $\Delta(G)\ge R(k)$, by Ramsey's Theorem, $G$ contains either a complete subgraph $K_{k+1}$ or a star $K_{1,k}$ as an isometric subgraph. Thus, Lemma~\ref{lem: isometric subgraphs}, $\emph{rank}_d(G)>k$. Hence, if $\emph{rank}_d(G)=k$, then $\emph{diam}(G)< k$ and $\Delta(G)< R(k)$. Therefore, by Lemma~\ref{lem: upper bound diameter and delta}, $|V(G)|\le f(k,R(k))$ and the result holds.
\end{proof}

\subsection{Graphs with distance rank $k\in\{2,3\}$}

A connected graph $G$ with at least three vertices contains either $P_3$ or $K_3$ as isometric subgraphs and thus $\emph{rank}_d(G)\ge 3$. For graphs used throughout this section, see Figure~\ref{fig: graphs}. In particular, it
is easy to check that $\emph{rank}_d(Pa)=\emph{rank}_d(Di)=\emph{rank}_d(Hou)=4$.

	\begin{figure}
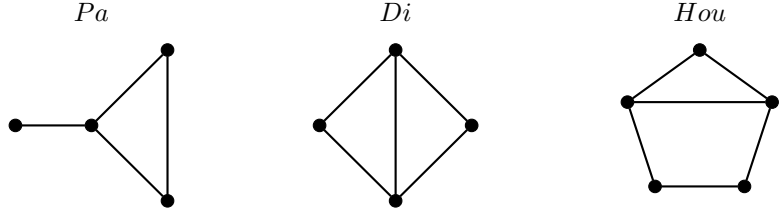

	\begin{center}
\tikz{
	\node[circle,draw,fill,scale=.5] at (-5,0) (p1) {};
	\node[circle,draw,fill,scale=.5] at (-4,0) (p2) {};
	\node[circle,draw,fill,scale=.5] at (-3,1) (p3) {};
	\node[circle,draw,fill,scale=.5] at (-3,-1) (p4) {};
	
	\draw (p1) [thick] to (p2);
	\draw (p3) [thick] to (p2);
	\draw (p4) [thick] to (p2);
	\draw (p3) [thick] to (p4);
	
	\node[circle,draw,fill,scale=.5] at (-1,0) (d1) {};
	\node[circle,draw,fill,scale=.5] at (1,0) (d2) {};
	\node[circle,draw,fill,scale=.5] at (0,1) (d3) {};
	\node[circle,draw,fill,scale=.5] at (0,-1) (d4) {};	

	\draw (d1) [thick] to (d3);
	\draw (d1) [thick] to (d4);
	\draw (d2) [thick] to (d3);
	\draw (d2) [thick] to (d4);
	\draw (d4) [thick] to (d3);
	
\foreach \i in {0,1,...,4}
{
\node[shape=circle,draw,fill,scale=.5] at ($(90-72*\i:1)+(4,0)$) (h\i) {};
}
\foreach \i in {0,1,...,4}{
\pgfmathtruncatemacro{\j}{(mod(\i+1,5))};
	\draw (h\i) [thick] to (h\j);
}
	\draw (h1) [thick] to (h4);

	\node[circle] at (-4,1.5) (Pa) {$Pa$};
	\node[circle] at (0,1.5) (Di) {$Di$};
	\node[circle] at (4,1.5) (Ho) {$Hou$};
}		
	\caption{$Pa$, the paw graph; $Di$, the diamond graph; and $Hou$, the house graph.}\label{fig: graphs}
    \end{center}
    \end{figure}

\begin{remark}
	A connected graph $G$ has $\emph{rank}_d(G)=2$ if and only if $G=K_2$.
\end{remark}

The following lemma is a consequence of the isometric subgraph definition.

\begin{lemma}\label{lem: distance two}
	If $H$ is a connected induced subgraph of a connected graph $G$ such that $d_H(u,v)\le 2$ for every $u,v\in V(H)$, then $H$ is an isometric subgraph of $G$.
\end{lemma}

As a consequence of the above lemma the graphs with distance rank equals three are cographs.

\begin{lemma}\label{lem: distance rank three implies cograph}
	If $G$ is a connected graph with $\emph{rank}_d(G)=3$, then $G$ is a cograph.
\end{lemma}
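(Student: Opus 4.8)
The plan is to show the contrapositive: if $G$ is a connected graph that is not a cograph, then $\mathrm{rank}_d(G) \ge 4$. Since $G$ is not a cograph, it contains an induced $P_4$, say on vertices $a,b,c,d$ with edges $ab$, $bc$, $cd$. The key point is that in $G$ itself the distances among these four vertices need not be those of $P_4$ (there could be shortcuts through other vertices), so I cannot directly invoke Lemma~\ref{lem: diameter lower bound} or treat the $P_4$ as isometric. Instead I would argue on the distances $d_G$ restricted to $\{a,b,c,d\}$ and show that whatever $4\times 4$ distance matrix arises is nonsingular, hence (being a principal submatrix of $D(G)$) forces $\mathrm{rank}_d(G)\ge 4$.

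First I would pin down the possible distances. Adjacencies give $d_G(a,b)=d_G(b,c)=d_G(c,d)=1$ and $d_G(b,d)=d_G(a,c)=2$ (they are nonadjacent, but each pair has the common neighbor $b$ resp.\ $c$ in $G$). The only distance not forced is $d_G(a,d)$, which lies in $\{2,3\}$ since $a\!-\!b\!-\!c\!-\!d$ is a walk of length $3$ and $a,d$ are nonadjacent. So exactly two $4\times 4$ symmetric matrices can occur as the principal submatrix of $D(G)$ on $\{a,b,c,d\}$:
\[
M_3=\begin{pmatrix}0&1&2&3\\1&0&1&2\\2&1&0&1\\3&2&1&0\end{pmatrix},
\qquad
M_2=\begin{pmatrix}0&1&2&2\\1&0&1&2\\2&1&0&1\\2&2&1&0\end{pmatrix}.
\]
The second step is the routine computation $\det M_3 \ne 0$ and $\det M_2 \ne 0$ (the first is the Graham–Pollak value for $P_4$, namely $-3\cdot 2^{2}=-12$; the second I would just expand, getting a nonzero integer). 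Hence in either case the principal submatrix has rank $4$, so $\mathrm{rank}_d(G)\ge 4$, contradicting $\mathrm{rank}_d(G)=3$. Therefore $G$ is $P_4$-free, i.e.\ a cograph.

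The main obstacle is conceptual rather than computational: recognising that an induced $P_4$ in $G$ need \emph{not} be an isometric subgraph, so Lemma~\ref{lem: isometric subgraphs} does not apply as stated; one must work with the actual induced $4\times 4$ distance submatrix and handle the case distinction on $d_G(a,d)\in\{2,3\}$. Everything else reduces to the two small determinant evaluations. (One could alternatively phrase the case $d_G(a,d)=2$ via Lemma~\ref{lem: distance two}, since then $G[\{a,b,c,d\}]$ has all distances at most $2$ and is therefore isometric; but since $G[\{a,b,c,d\}]$ is an induced $P_4$ with diameter $3$, that case forces $d_G(a,d)=3$, so in fact only $M_3$ can occur — which streamlines the argument to a single determinant, $\det M_3 \ne 0$, and an appeal to Lemma~\ref{lem: distance two}.)
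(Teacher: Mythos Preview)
Your main argument is correct and takes a genuinely different route from the paper. The paper handles the case $d_G(a,d)=2$ structurally: it picks a common neighbour $v$ of $a$ and $d$, observes that $G[\{a,b,c,d,v\}]$ must contain a diamond, a house, or a $C_5$ as an induced subgraph, and then invokes Lemma~\ref{lem: distance two} (each of these has diameter $2$, hence is isometric in $G$) together with the precomputed distance ranks of those three graphs. Your approach bypasses all of this by working directly with the $4\times 4$ principal submatrix of $D(G)$ indexed by $\{a,b,c,d\}$, noting that only $M_2$ or $M_3$ can occur, and checking both are nonsingular (indeed $\det M_3=-12$ and $\det M_2=-7$). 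This is more self-contained: no fifth vertex, no case analysis on induced $5$-vertex subgraphs, no need for the distance ranks of the diamond, house, or $C_5$. The paper's approach, on the other hand, stays within the ``isometric subgraph'' framework set up earlier and reuses those small-graph computations.

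One caveat: your parenthetical alternative is mistaken. Lemma~\ref{lem: distance two} requires $d_H(u,v)\le 2$ for all $u,v\in V(H)$, where $H=G[\{a,b,c,d\}]$; here $H\cong P_4$ has $d_H(a,d)=3$, so the hypothesis fails regardless of what $d_G(a,d)$ is. Having $d_G(a,d)=2$ does \emph{not} make $H$ isometric, nor does it produce a contradiction; it is a perfectly live case (and is exactly the case your matrix $M_2$ handles). So the ``streamlined'' one-determinant version does not work as written --- you do need both $M_2$ and $M_3$, as in your main argument.
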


\begin{proof}
We prove the contrapositive statement. Assume that $G$ contains a path  with four vertices $P:\;a,b,c,d$ as an induced subgraph. If $P$ was an isometric subgraph, then $\emph{rank}_d(G)\ge 4$ by Lemma~\ref{lem: isometric subgraphs}. Assume that $d_G(a,d)=2$. Consequently, there exists a vertex $v$ in $G$ that is adjacent to $a$ and $d$. Thus $G[\{a,b,c,d,v\}]$ contains a diamond as an induced subgraph or is isomorphic to $C_5$ or the house. Since the diamond and the house have distance rank $4$ and the $C_5$ has distance rank $5$, it follows from Lemma~\ref{lem: distance two}  that $\emph{rank}_d(G)\ge 4$. Thus, if $G$ is not a cograph, then $\emph{rank}_d(G)\geq 4$. Therefore,
the result follows.
\end{proof}

\begin{theorem}
	If $G$ is a connected graph with $\emph{rank}_d(G)=3$, then $G$ is one of the following graphs: $K_3$, $P_3$, or $C_4$.
\end{theorem}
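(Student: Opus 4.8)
The plan is to combine the two structural facts already established—that a connected graph with distance rank $3$ is a cograph (Lemma~\ref{lem: distance rank three implies cograph}), and that a connected cograph decomposes as a join $H \vee J$—with a short analysis of how the join operation interacts with distances and hence with the distance matrix. The key observation is that if $G = H \vee J$ with $H$ on $p$ vertices and $J$ on $q$ vertices, then every pair of vertices in $G$ is either adjacent or at distance exactly $2$ (the diameter of a nontrivial join is at most $2$), and moreover the distance between $u \in V(H)$ and $w \in V(J)$ is always $1$. So $D(G)$, up to simultaneous row/column permutation, has the block form $\left(\begin{smallmatrix} D_H' & J_{p\times q} \\ J_{q\times p} & D_J' \end{smallmatrix}\right)$, where $J$ denotes an all-ones block and $D_H'$ (resp.\ $D_J'$) is the matrix of distances within $V(H)$ measured in $G$ — which equals $2(\1\1^{\top} - I) = D_d(H)$-type matrix on the vertices of $H$ only if $H$ itself is edgeless, and more generally has entries in $\{0,1,2\}$.

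First I would dispose of the easy cases and reduce to the join form. A connected graph on at most $3$ vertices with distance rank $3$ is exactly $P_3$ or $K_3$ (it must have $3$ vertices, and the only connected graphs on $3$ vertices are $P_3$ and $K_3$, both of distance rank $3$ by the remarks preceding Figure~\ref{fig: graphs}). So assume $|V(G)| \ge 4$. By Lemma~\ref{lem: distance rank three implies cograph}, $G$ is a cograph, and being connected it is a join $G = H \vee J$. Within $G$ any two vertices are at distance $1$ or $2$, so all entries of $D(G)$ lie in $\{0,1,2\}$. Next I would argue that $G$ cannot contain an induced $P_3$ on three vertices that is simultaneously an isometric subgraph together with a fourth vertex forcing rank $\ge 4$: concretely, I want to show $G$ contains no induced $2K_2$, no induced diamond, no induced paw, no $C_4$, no house, and no $C_5$ — since each of those has distance rank $\ge 4$ and, having diameter $\le 2$, is an isometric (indeed induced) subgraph by Lemma~\ref{lem: distance two}. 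Cographs are already $P_4$-free; I must rule out the remaining small configurations.

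The main work, then, is to show that a connected cograph on $\ge 4$ vertices that contains none of $\{2K_2, C_4, \text{diamond}, \text{paw}, \text{house}, C_5\}$ does not exist—i.e., every such cograph on $\ge 4$ vertices contains one of these. Here I would use the join structure: write $G = H \vee J$ with $|V(H)| \le |V(J)|$ and $|V(H)| + |V(J)| \ge 4$. If both $H$ and $J$ have an edge, then picking an edge in each gives an induced $K_4$ minus a perfect matching... no—more carefully, two independent vertices in $H$ joined to two independent vertices in $J$ give an induced $C_4$, and if $H$ has an edge and $J$ has two nonadjacent vertices we get a diamond or paw; I would enumerate: if $|V(H)| \ge 2$ then $H$ has either two adjacent or two nonadjacent vertices, and likewise for $J$, and in each of the resulting four combinations a four-vertex induced subgraph on two vertices from each side is $C_4$, paw, diamond, or $K_4$. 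Since $C_4$, paw, diamond are forbidden, we are forced into $|V(H)| = 1$, i.e.\ $G$ has a universal vertex $u$ and $G - u = J$ is again a cograph with the same forbidden subgraphs (adding a universal vertex back cannot create $2K_2$ from something, but can create a paw/diamond/house from an edge or path in $J$, etc.). Iterating, $G$ is built by repeatedly adding universal vertices to a graph with no edges at all (else a universal vertex plus an edge below it is a paw or triangle-with-pendant growing to forbidden configs, to be checked), which forces $G$ to be complete; but $K_n$ has distance rank $n > 3$ for $n \ge 4$. This contradiction shows $|V(G)| \le 3$, completing the proof. The delicate point I expect to be the real obstacle is the careful bookkeeping in the inductive "add a universal vertex" step: I need to verify that each time we reintroduce a universal vertex to a nontrivial graph that is not complete, one of the six forbidden induced subgraphs appears, and this requires checking a handful of small cases (a universal vertex over a $P_3$ gives a "paw"? no — it gives a $K_1 \vee P_3$; I should identify which forbidden graph that is, likely the house or diamond appears one level up) — so the bulk of the writeup is a finite case check, organized cleanly via the join decomposition.
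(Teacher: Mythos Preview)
Your proposal contains a fatal error: you list $C_4$ among the induced subgraphs that would force $\mathrm{rank}_d(G)\ge 4$, but $C_4$ has distance rank exactly $3$. Indeed, $C_4$ is one of the three graphs in the conclusion of the theorem. A quick check: with vertices $1,2,3,4$ around the cycle, the rows of $D(C_4)$ satisfy $r_1+r_3=r_2+r_4$, so the rank is at most $3$, and it is at least $3$ because $P_3$ sits inside isometrically. Consequently, your overall strategy of deriving a contradiction from $|V(G)|\ge 4$ cannot succeed, since $G=C_4$ itself has four vertices and distance rank $3$.

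This error propagates through your case analysis. When $G=H\vee J$ with $|V(H)|,|V(J)|\ge 2$ and each side contains two nonadjacent vertices, the four chosen vertices induce a $C_4$; you then discard this case as ``forbidden,'' which is unjustified. The subsequent reduction to ``$G$ has a universal vertex, iterate'' and the conclusion ``$G$ is complete'' are therefore unsupported. (Incidentally, house and $C_5$ each contain an induced $P_4$, so they are automatically excluded in a cograph and add nothing to your list; and $2K_2$ is disconnected, so you cannot invoke Lemma~\ref{lem: distance two} for it directly---you would need a separate rank computation on the corresponding $4\times 4$ principal submatrix of $D(G)$.)

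The paper's argument avoids this trap by using only genuinely rank-$4$ obstructions: the paw and the diamond force each side of the join to be $\{P_3,\,K_2+K_1\}$-free, hence either complete or edgeless; then $K_4$ and $K_{1,3}$ (both of distance rank $4$) bound the sizes, yielding exactly $K_3$, $P_3$, and $C_4$. To repair your approach you would need to keep $C_4$ as a live possibility throughout rather than trying to exclude it.
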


\begin{proof}
	Let $G$ be a graph with $\emph{rank}_d(G)=3$. By Lemma~\ref{lem: distance rank three implies cograph} $G$ is a cograph. As $G$ is also connected and has at least $3$ vertices, we have $G=F\lor H$, where $F$ and $H$ are two
	non-empty cographs. Notice that, by Lemma~\ref{lem: distance two}, $G$ does not contain a paw as an induced subgraph because the distance rank of the paw is equal to $4$. Since $G$ contains neither a diamond nor a paw as induced subgraphs, $H$ (resp. $F$) contains neither $P_3$ nor $K_2+P_1$ as induced subgraphs. Hence $H$ (resp. $F$) is either a complete graph or isomporphic to $nK_1$. Assume first that one of $H$ and $F$ is a complete graph with at least two vertices, say $H$. By Lemma~\ref{lem: distance two}, since $\emph{rank}_d(K_4)=4$, $H$ has exactly two vertices. Since $G$ contains neither a diamond nor $K_4$ as induced subgraphs, $F$ contains only one vertex, and thus $G$ is isomorphic to $K_3$. We can assume now that $F$ and $H$ are isomorphic to $rK_1$ and $sK_1$, respectively. Since $G$ does not contain $S_{1,3}$ as an induced subgraph, we conclude that $r\le 2$ and $s\le 2$. Therefore, $G$ is isomorphic to $P_3$, or $C_4$. 
\end{proof}

\section{Twins and null space}\label{sec: twins and null space}

Let $G$ be a graph with vertices $v_1,v_2,\ldots, v_n$, and assume that $v_1$ and $v_2$ are either true twins or false twins. Notice that if $j\not\in \{1,2\}$, then $d_G(v_1,v_j)=d_G(v_2,v_j)$. Let $D$ be the distance matrix of $G$
and $\vec{x}$ a vector in the null space of $D$. We denote the coordinate of $\vec{x}$ that corresponds to  vertex $v_i$ as $\vec{x}_{v_i}$. Notice that the coordinate corresponding to $v_i$ of $D\vec{x}$ satisfies
\[
[D\vec{x}]_{v_i}=\sum_{j=1}^nd_G(v_i,v_j)\vec{x}_{v_j},
\]
for every $1\le i\le n$. Hence
\begin{align*}
[D\vec{x}]_{v_1}-[D\vec{x}]_{v_2}=&\sum_{j=1}^nd_G(v_1,v_j)\vec{x}_{v_j}-\sum_{j=1}^nd_G(v_2,v_j)\vec{x}_{v_j}\\
=&d_G(v_1,v_1)\vec{x}_{v_1}+d_G(v_1,v_2)\vec{x}_{v_2}-d_G(v_1,v_2)\vec{x}_{v_1}-d_G(v_2,v_2)\vec{x}_{v_2}\\
=&d_G(v_1,v_2)(\vec{x}_{v_2}-\vec{x}_{v_1}).
\end{align*}
Since $\vec{x}$ is in the null space of $D$, $[D\vec{x}]_{v_1}=[D\vec{x}]_{v_2}=0$. Thus $d_G(v_1,v_2)(\vec{x}_{v_2}-\vec{x}_{v_1})=0$,
which implies $\vec{x}_{v_2}=\vec{x}_{v_1}$. From the preceding discussion we obtain the following result.

\begin{lemma}\label{lem:twinsiguales}
Let $G$ be a graph with distance matrix $D$. If $v_i$ and $v_j$ are either true twins or false twins and 
$\vec{x}$ is in the null space of $D$, then $\vec{x}_{v_i}=\vec{x}_{v_j}$.
\end{lemma}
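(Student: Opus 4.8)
The plan is to work coordinatewise and to exploit the fact that twins are indistinguishable from the point of view of distances: if $v_i$ and $v_j$ are true twins or false twins, then $d_G(v_i,v_k)=d_G(v_j,v_k)$ for every vertex $v_k \notin \{v_i,v_j\}$, since $N[v_i]=N[v_j]$ (resp.\ $N(v_i)=N(v_j)$) lets one reroute any shortest path through the other twin at its first edge. I would record this equality first, as it is the only structural input needed.

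Next I would subtract the two relevant coordinates of $D\vec{x}$. Writing $[D\vec{x}]_{v_i}=\sum_{k}d_G(v_i,v_k)\vec{x}_{v_k}$ and likewise for $v_j$, the distance equality above kills every summand with $v_k \notin \{v_i,v_j\}$, so
\[
[D\vec{x}]_{v_i}-[D\vec{x}]_{v_j}=d_G(v_i,v_i)\vec{x}_{v_i}+d_G(v_i,v_j)\vec{x}_{v_j}-d_G(v_j,v_i)\vec{x}_{v_i}-d_G(v_j,v_j)\vec{x}_{v_j}.
\]
Using $d_G(v_i,v_i)=d_G(v_j,v_j)=0$ and the symmetry of $D$, the right-hand side simplifies to $d_G(v_i,v_j)\bigl(\vec{x}_{v_j}-\vec{x}_{v_i}\bigr)$.

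Finally, since $\vec{x}$ is in the null space of $D$, both coordinates on the left vanish, so $d_G(v_i,v_j)\bigl(\vec{x}_{v_j}-\vec{x}_{v_i}\bigr)=0$; as $v_i$ and $v_j$ are distinct vertices of a connected graph, $d_G(v_i,v_j)\ge 1>0$ (it equals $1$ for true twins and $2$ for false twins), and dividing yields $\vec{x}_{v_i}=\vec{x}_{v_j}$. I do not expect a genuine obstacle here; the only point deserving a sentence of justification is the distance-equality for twins, and even that is immediate from the definition, so the argument is essentially the telescoping cancellation followed by dividing out the positive number $d_G(v_i,v_j)$.
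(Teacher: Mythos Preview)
Your proof is correct and follows essentially the same route as the paper: subtract the two coordinates of $D\vec{x}$, use that twins have equal distances to all other vertices so only the $v_i,v_j$ terms survive, simplify to $d_G(v_i,v_j)(\vec{x}_{v_j}-\vec{x}_{v_i})=0$, and divide by the nonzero distance. Your write-up is in fact slightly more careful, since you make explicit that $d_G(v_i,v_j)\in\{1,2\}$ is positive, which the paper uses implicitly.
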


Lemma \ref{lem:twinsiguales} allows to use a smaller matrix to study the null space of $D$.
To do that, we introduce some notation. We say that a partition  $\mathcal{W}=\{W_1,W_2,\ldots,W_k\}$  of the set of vertices
is a \textit{twin partition} of a graph $G$ if $W_i$ is either a set of true twins or a set of false twins
for every $i$. Notice that we allow $|W_i|=1$. If $W_i$ is a set of true (false) twins for every $i$, then
we say that $\mathcal{W}$ is a \textit{true (false) twin partition} of $G$.

Let $\mathcal{W}=\{W_1,W_2,\ldots,W_k\}$ be a twin partition of $G$ and $w_1,\ldots, w_k$ a set
of vertices with $w_i\in W_i$ for each $1\le i\le k$. We define the \textit{quotient matrix} $D/\mathcal{W}$
by
\[
(D/\mathcal{W})_{i,j}=
\begin{cases}
|W_j|d_G(w_i,w_j)& \text{ if $i\neq j$,}\\
(|W_i|-1)&\text{ if $i=j$ and $W_i$ is a set of true twins,}\\
2(|W_i|-1)&\text{ if $i=j$ and $W_i$ is a set of false twins.}\\
\end{cases}
\]

Let $\vec{x}\in \mathbb{R}^n$ be a vector such that $\vec{x}_{v_i}=\vec{x}_{v_j}$ if $v_i$ and $v_j$ are twin vertices and
let $\vec{y}\in\mathbb{R}^k$ such that $\vec{y}_{v_i}=\vec{x}_{w_i}$.
We have
\begin{align*}
[D/\mathcal{W} \vec{y}]_{v_i}=\sum_{j=1,j\neq i}^{k}d_G(w_i,w_j)|W_j|\vec{x}_{w_j} +c_i(|W_i|-1)\vec{x}_{w_i},
\end{align*} 
where $c_i=1$ if $W_i$ consists of true twins and $c_i=2$ if $W_i$ consists of false twins.
On the other hand
\begin{align*}
[D\vec{x}]_{w_i}=&\sum_{v_j\in V}d_G(v_i,v_j)\vec{x}_{v_j}\\
=&\sum_{\ell=1,\ell \neq i}^k\sum_{v_j\in W_\ell}d_G(v_i,v_j)\vec{x}_{v_j}+\sum_{v_j\in W_i, v_j\neq w_i}d_G(v_i,v_j)\vec{x}_{v_j}\\
=&\sum_{\ell=1,\ell\neq i}^k|W_\ell|d_G(v_i,w_\ell)\vec{x}_{w_\ell}+c_i(|W_i|-1)\vec{x}_{w_i}\\
=&[D/\mathcal{W} \vec{y}]_{v_i}.
\end{align*} 
Thus, $\vec{x}$ is in the null space of $D$ if and only if $\vec{y}$ is in the null space of $D/\mathcal{W}$.
Combined with Lemma \ref{lem:twinsiguales}, this implies that the nullity of $D$ equals the nullity of $D/\mathcal{W}$.
\begin{lemma}\label{lem:matrizcociente}
Let $G$ be a graph, $D$ the distance matrix of $G$ and $\mathcal{W}=\{W_1\ldots,$ $W_k\}$ 
be a partition of the vertices of $G$ into sets of twins, each of them consisting of either true twins 
or false twins. For each $i$, let $w_i$ be a vertex in $W_i$. If $D/\mathcal{W}$ is the matrix defined as
\[
D/\mathcal{W}_{i,j}=\begin{cases} 
|W_j|d_G(w_i,w_j)&\text{if $i\neq j$,}\\
|W_i|-1&\text{if $i=j$ and $W_i$ consists of true twins, and}\\
2(|W_i|-1)&\text{if $i=j$ and $W_i$ consists of false twins,}\\
\end{cases}
\]
then the nullity of $D$ is equal to the nullity of $D/\mathcal{W}$.
\end{lemma}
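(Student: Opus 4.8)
The plan is essentially to promote the informal computation that immediately precedes the statement into a clean proof. The key point is to set up a linear isomorphism between the relevant subspaces and track the nullity across it. Concretely, let $U\subseteq\mathbb R^n$ be the subspace of vectors $\vec x$ that are constant on each twin class $W_i$, i.e.\ $\vec x_{v}=\vec x_{v'}$ whenever $v,v'\in W_i$. By Lemma~\ref{lem:twinsiguales}, the null space of $D$ is contained in $U$; thus $\operatorname{null}(D)$ equals the nullity of the restriction $D|_U$ viewed as a map $U\to\mathbb R^n$. Next, consider the coordinate-collapse map $\pi:U\to\mathbb R^k$ sending $\vec x\in U$ to the vector $\vec y$ with $\vec y_i=\vec x_{w_i}$; this is a linear bijection (its inverse spreads the value $\vec y_i$ over all of $W_i$). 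So $\operatorname{null}(D)$ equals the dimension of $\{\vec x\in U: D\vec x=\vec 0\}$, and it suffices to show that under $\pi$ this set corresponds exactly to $\ker(D/\mathcal W)$.

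The heart of the argument is the identity $[D\vec x]_{w_i}=[(D/\mathcal W)\vec y]_i$ for every $i$, which is exactly the displayed computation in the excerpt: split the sum $\sum_{v_j\in V}d_G(v_i,v_j)\vec x_{v_j}$ into the contributions of the classes $W_\ell$ with $\ell\neq i$ and the contribution of $W_i\setminus\{w_i\}$, use that $\vec x$ is constant on each class to factor out $\vec x_{w_\ell}$, and use that within $W_i$ every vertex is at distance $1$ from $w_i$ if $W_i$ is a set of true twins (contributing $(|W_i|-1)\vec x_{w_i}$) and at distance $2$ if it is a set of false twins (contributing $2(|W_i|-1)\vec x_{w_i}$); these are precisely the diagonal entries of $D/\mathcal W$. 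I should also remark that $[D\vec x]_v$ is the same for every $v\in W_i$ (again because the distances from two twins to any outside vertex coincide, and Lemma~\ref{lem:twinsiguales} handles the inside contribution), so requiring $[D\vec x]_{w_i}=0$ for all $i$ is equivalent to $D\vec x=\vec 0$. Combining: $D\vec x=\vec0$ with $\vec x\in U$ $\iff$ $[D\vec x]_{w_i}=0$ for all $i$ $\iff$ $[(D/\mathcal W)\pi(\vec x)]_i=0$ for all $i$ $\iff$ $\pi(\vec x)\in\ker(D/\mathcal W)$. Since $\pi$ is a bijection, $\operatorname{null}(D)=\dim\ker(D/\mathcal W)=\operatorname{null}(D/\mathcal W)$.

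I do not expect any serious obstacle here; the lemma is a bookkeeping result and the computation is already laid out. The one place that needs a little care is the handling of the diagonal: one must be careful that in the true-twin case $d_G(v,v')=1$ for distinct $v,v'\in W_i$ (because $v\in N[v']$) while in the false-twin case $d_G(v,v')=2$ (they are nonadjacent but share a common neighbour, since $G$ is connected and $|W_i|\ge 2$ forces $N(v)=N(v')\neq\varnothing$), so that the coefficients $c_i\in\{1,2\}$ in the informal derivation match the stated diagonal entries $|W_i|-1$ and $2(|W_i|-1)$ exactly. A minor subtlety worth a sentence is the degenerate case $|W_i|=1$, where the diagonal entry is $0$ under either reading and the class contributes nothing beyond its own coordinate, so the formula is consistent. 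With these remarks in place the proof is complete.
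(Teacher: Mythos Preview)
Your proposal is correct and follows essentially the same approach as the paper: the paper also uses Lemma~\ref{lem:twinsiguales} to reduce to vectors constant on twin classes, performs the identical coordinate computation $[D\vec x]_{w_i}=[(D/\mathcal W)\vec y]_i$, and concludes that the two null spaces are in bijection. If anything, you are slightly more careful than the paper in explicitly noting that $[D\vec x]_v$ is constant over $v\in W_i$ (so that checking at representatives suffices) and in treating the $|W_i|=1$ case; the paper leaves these implicit.
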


\section{Threshold graphs}~\label{sec: threshold graphs}
It is well-known that we can obtain any threshold graph by repeatedly adding either isolated vertices or universal vertices. Thus, a threshold graph can be represented by 
a finite sequence  $(a_i)_{i=1}^n$, with $a_i\in\{0,1\}$, with edges of the form $\{v_i,v_j\}$ if $a_i=1$ and $i>j$.
We are going to assume $a_n=1$ as otherwise the graph is not connected. Notice that $a_1$ can be assumed to be $0$ since otherwise would give place to the same graph. Since the sequence $(a_i)$ consists of some consecutive zeros, followed by 
consecutive ones and so on, we can write it as 
$[0^{n_1},1^{n_2},0^{n_3},\ldots ,1^{n_{2k-2}},0^{n_{2k-1}},1^{n_{2k}}]$, 
where $a^b$ represents $b$ consecutive copies of the number $a$. Notice that in 
$[0^{n_1},1^{n_2},0^{n_3},\ldots ,1^{n_{2k-2}},0^{n_{2k-1}},1^{n_{2k}}]$ the number $0$ 
appears in every odd position and $1$ in every even position, thus the only values providing information are 
$(n_i)$. We can represent $(a_i)$ with the sequence $[n_1,n_2,n_3,\ldots,n_{2k-2},n_{2k-1},n_{2k}]$,   
called the \emph{power sequence} of the threshold graph $G$.

As every $0$ vertex is at distance $2$ of all previous vertices and every $1$ vertex is at distance $1$ of all previous
vertices, if  $[n_1,n_2,n_3,\ldots,n_{2k-2},n_{2k-1},n_{2k}]$ is the power sequence of a threshold graph $G$, then 
the distance matrix $D$ of $G$ is
\[
\begin{pmatrix}
    2(J-I) & J & 2J & J & \ldots & J & 2J & J \\
    J & J-I & 2J & J & \ldots &J & 2J &J \\
    2J & 2J & 2(J-I) & J &\ldots &J & 2J &J \\
    J & J & J & J-I & \ldots & J & 2J & J\\
    \vdots & \vdots & \vdots & \vdots & \ldots & \vdots & \vdots & \vdots\\
    J & J & J & J & \ldots &J-I & 2J & J\\
    2J & 2J & 2J & 2J & \ldots & 2J & 2(J-I) & J\\
    J & J & J & J & \ldots & J & J & (J-I)
    \end{pmatrix},
    \]
where each $J$ in position $i,j$ stands for a block of $n_i\times n_j$ ones, and each $I$ in position $i,i$ an 
$n_i\times n_i$ identity matrix.
Notice that consecutive zeros produce false twins, whereas consecutive ones produce true twins. 
We can partition the vertices of $G$ into $\mathcal{W}=\{W_1,\ldots, W_{2k}\}$, where $W_i$ consists of $n_i$ 
false twins if $i$ is odd and $n_i$ true twins if $i$ is even. Consequently $D/ \mathcal W$ equals
\[
\begin{pmatrix}
2n_1-2&n_2& 2n_3 & n_4 & \ldots & n_{2k-2} & 2n_{2k-1} & n_{2k}\\
n_1&n_2-1& 2n_3 & n_4 & \ldots & n_{2k-2} & 2n_{2k-1} & n_{2k}\\
2n_1&2n_2& 2n_3-2 & n_4 & \ldots & n_{2k-2} & 2n_{2k-1} & n_{2k}\\
n_1&n_2& n_3 & n_4-1 & \ldots & n_{2k-2} & 2n_{2k-1} & n_{2k}\\
\vdots & \vdots & \vdots & \vdots & \ldots & \vdots & \vdots & \vdots\\
n_1&n_2& n_3 & n_4 & \ldots & n_{2k-2}-1 & 2n_{2k-1} & n_{2k}\\
2n_1&2n_2& 2n_3 & 2n_4 & \ldots & 2n_{2k-2} & 2n_{2k-1}-2 & n_{2k}\\
n_1&n_2& n_3 & n_4 & \ldots & n_{2k-2} & n_{2k-1} & n_{2k}-1\\
\end{pmatrix},
\]
Lemma \ref{lem:matrizcociente} allows us to use $D/ \mathcal W$ instead of $D$ to study its nullity. Given a matrix $A$ having $m$ rows, we denote by $r_i(A)$ the $i$-th row of $A$ for each $1\le i\le m$. When the context is clear enough, we use $r_i$ for shortness. 
We proceed to apply row operations to $D/\mathcal{W}$. We begin by doing $r_i-r_{i+1} \to r_i$ for $i$ moving from $1$ to $2k-1$
\[
\begin{pmatrix}
n_1-2&1& 0 & 0 & \ldots & 0  & 0 & 0\\
-n_1&-n_2-1& 2 & 0 & \ldots & 0 & 0 & 0\\
n_1&n_2& n_3-2 & 1 & \ldots & 0 & 0 & 0\\
-n_1&-n_2& -n_3 & -n_4-1 & \ldots & 0 & 0 & 0\\
\vdots & \vdots & \vdots & \vdots & \ldots & \vdots & \vdots & \vdots\\
-n_1&-n_2& -n_3 & -n_4 & \ldots & -n_{2k-2}-1 & 2 & 0\\
n_1&n_2& n_3 & n_4 & \ldots & n_{2k-2} & n_{2k-1}-2 & 1\\
n_1&n_2& n_3 & n_4 & \ldots & n_{2k-2} & n_{2k-1} & n_{2k}-1\\
\end{pmatrix},
\]
we multiply every even row by $-1$, but the last one
\[
\begin{pmatrix}
n_1-2&1& 0 & 0 & \ldots & 0  & 0 & 0\\
n_1&n_2+1& -2 & 0 & \ldots & 0 & 0 & 0\\
n_1&n_2& n_3-2 & 1 & \ldots & 0 & 0 & 0\\
n_1&n_2& n_3 & n_4+1 & \ldots & 0 & 0 & 0\\
\vdots & \vdots & \vdots & \vdots & \ldots & \vdots & \vdots & \vdots\\
n_1&n_2& n_3 & n_4 & \ldots & n_{2k-2}+1 & -2 & 0\\
n_1&n_2& n_3 & n_4 & \ldots & n_{2k-2} & n_{2k-1}-2 & 1\\
n_1&n_2& n_3 & n_4 & \ldots & n_{2k-2} & n_{2k-1} & n_{2k}-1\\
\end{pmatrix}.
\]
Finally, we do $r_{2k-i}-r_{2k-i-1} \to r_{2k-i}$ for $i$ moving from $0$ to $2k-2$,
\[
\begin{pmatrix}
n_1-2&1& 0 & 0 & \ldots & 0  & 0 & 0\\
2&n_2& -2 & 0 & \ldots & 0 & 0 & 0\\
0&-1& n_3 & 1 & \ldots & 0 & 0 & 0\\
0&0& 2 & n_4 & \ldots & 0 & 0 & 0\\
\vdots & \vdots & \vdots & \vdots & \ldots & \vdots & \vdots & \vdots\\
0&0&0&0& \ldots & n_{2k-2} & -2 & 0\\
0&0&0&0& \ldots & -1 & n_{2k-1} & 1\\
0&0&0&0& \ldots & 0 & 2 & n_{2k}-2\\
\end{pmatrix}.
\]
The first $2k-1$ rows are linearly independent. Thus
the nullity of $D/\mathcal{W}$ is at most $1$. Lemma \ref{lem:matrizcociente}
yields the following.
\begin{theorem}\label{thm: nulity of threshold graphs}
If $D$ is the distance matrix of a connected threshold graph, then the nullity of $D$ 
is at most $1$.
\end{theorem}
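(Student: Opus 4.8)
The plan is to avoid working with the (large) matrix $D$ directly and instead study the $2k\times 2k$ quotient matrix $D/\mathcal W$ furnished by Lemma~\ref{lem:matrizcociente}, then reduce $D/\mathcal W$ by elementary row operations to a matrix from which the rank can be read off. First I would fix the power sequence $[n_1,n_2,\dots,n_{2k}]$ of the connected threshold graph $G$ (so $n_1\ge 1$ and $n_{2k}\ge 1$), recall that a $0$-vertex is at distance $2$ from every earlier vertex while a $1$-vertex is at distance $1$ from every earlier vertex, and write $D(G)$ in the block form displayed above. Since a block of consecutive zeros is a set of false twins and a block of consecutive ones is a set of true twins, the partition $\mathcal W=\{W_1,\dots,W_{2k}\}$ into these blocks is a twin partition; hence by Lemma~\ref{lem:matrizcociente} the nullity of $D(G)$ equals the nullity of $D/\mathcal W$, and it suffices to prove $\operatorname{rank}(D/\mathcal W)\ge 2k-1$.

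Next I would carry out, in order, the three passes of elementary operations indicated in the computation above: (i) $r_i-r_{i+1}\to r_i$ for $i=1,\dots,2k-1$, which collapses each row against the next and leaves only a few nonzero entries per row; (ii) multiply each even-indexed row except the last by $-1$, to fix the signs; and (iii) $r_{2k-i}-r_{2k-i-1}\to r_{2k-i}$ for $i=0,\dots,2k-2$, now sweeping upward from the bottom. All of these are elementary row operations, so the rank is preserved, and the outcome is the essentially tridiagonal matrix displayed last, whose superdiagonal entries in rows $1,\dots,2k-1$ are each equal to $1$ (in the odd rows) or $-2$ (in the even rows), and in particular are all nonzero.

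Finally I would prove that the first $2k-1$ rows of this last matrix are linearly independent by exhibiting a nonvanishing $(2k-1)\times(2k-1)$ minor: restrict those rows to the columns $2,3,\dots,2k$. In the resulting square matrix the first row has a single nonzero entry, namely the superdiagonal $1$; expanding the determinant along that row and iterating, at each stage the new first row again has a single nonzero entry (the next superdiagonal $1$ or $-2$), so the determinant equals $\pm$ the product of these superdiagonal entries and is therefore nonzero. Hence $\operatorname{rank}(D/\mathcal W)\ge 2k-1$, so the nullity of $D/\mathcal W$—and thus, by Lemma~\ref{lem:matrizcociente}, the nullity of $D$—is at most $1$. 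I do not anticipate a genuine obstacle here: the content is entirely a bounded, structured linear-algebra computation, and the only points requiring care are keeping the indices and signs consistent through the three passes of row reduction and treating the degenerate case $k=1$ (where $D/\mathcal W$ is $2\times 2$ and the claim is immediate) separately.
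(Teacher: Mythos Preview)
Your proposal is correct and follows essentially the same route as the paper: pass to the quotient matrix $D/\mathcal{W}$ via Lemma~\ref{lem:matrizcociente}, perform the three passes of row operations described, and conclude from the resulting tridiagonal form that the first $2k-1$ rows are linearly independent. In fact you go slightly beyond the paper by spelling out the minor computation (the paper simply asserts the independence of those rows), and your treatment of the $k=1$ case is unnecessary since the same minor argument applies there verbatim.
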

We now want to find precisely which threshold graphs have nullity $1$. 
Dividing even rows of the last matrix by $-2$, we obtain
\[
\begin{pmatrix}
n_1-2&1& 0 & 0 & \ldots & 0  & 0 & 0\\
-1&-n_2/2& 1 & 0 & \ldots & 0 & 0 & 0\\
0&-1& n_3 & 1 & \ldots & 0 & 0 & 0\\
0&0& -1 & -n_4/2 & \ldots & 0 & 0 & 0\\
\vdots & \vdots & \vdots & \vdots & \ldots & \vdots & \vdots & \vdots\\
0&0&0&0& \ldots & -n_{2k-2}/2 & 1 & 0\\
0&0&0&0& \ldots & -1 & n_{2k-1} & 1\\
0&0&0&0& \ldots & 0 & -1 & (2-n_{2k})/2\\
\end{pmatrix},
\]
that has the same nullity as $D/\mathcal{W}$.
Notice that if we let 
\[
\alpha_i=\begin{cases}
n_1-2&\text{if $i=1$}\\
n_i&\text{if $i>1$ is odd}\\
-n_i/2&\text{if $i<2k$ is even}\\
(2-n_{2k})/2&\text{if $i=2k$}
\end{cases}
\]
the last matrix is of the form
\[
\begin{pmatrix}
\alpha_1&1& 0 & 0 & \ldots & 0  & 0 & 0\\
-1&\alpha_2& 1 & 0 & \ldots & 0 & 0 & 0\\
0&-1&\alpha_3& 1 & \ldots & 0 & 0 & 0\\
0&0& -1 & \alpha_4 & \ldots & 0 & 0 & 0\\
\vdots & \vdots & \vdots & \vdots & \ldots & \vdots & \vdots & \vdots\\
0&0&0&0& \ldots & \alpha_{2k-2} & 1 & 0\\
0&0&0&0& \ldots & -1 & \alpha_{2k-1} & 1\\
0&0&0&0& \ldots & 0 & -1 & \alpha_{2k}\\
\end{pmatrix}.
\]
We can obtain the determinant of this last matrix inductively. Let $D_i$ be the main minor of $D$ obtained by deleting each row $k$ greater than $i$ and its corresponding columns, and let $d_i$ be the determinant
of $D_i$. It is not hard to prove that
$d_1=\alpha_1$ and $d_2=1+\alpha_1\alpha_2$ and
\begin{equation*}
d_i=\alpha_id_{i-1}+d_{i-2},
\end{equation*}
for each integer $3\le i\le 2k$. 

Thanks to the recursion, we can find some infinite families of threshold graphs with nullity $1$.
For example, if $\alpha_1,\ldots,\alpha_{2k-2}$ are such that $d_{2k-2}=0$, then $\alpha_{2k}=0$ implies $d_{2k}=0$ regardless of the value of $\alpha_{2k-1}$. As a way to apply this, notice that both $(\alpha_1,\alpha_2)=(2,-1/2)$
and $(\alpha_1,\alpha_2)=(1,-1)$ imply $d_2=0$. In addition, if $\alpha_4=0$, then $[4,1,n_3,2]$ and $[3,2,n_3,2]$ are power sequences of threshold graphs with distance nullity $1$ for every $n_3$, meaning that $K_2\vee (n_3K_1+(K_1\vee 4K_1))$ and $K_2\vee(n_3K_1+(K_2\vee 3K_1))$ are threshold graph whose distance matrices have nullity one.

Unfortunately if we wanted to keep applying this construction as is to yield a power sequence of length $6$ we would need to do $[4,1,n_3,0,n_5,2]=[4,1,n_3+n_5,2]$ because of the difference between $\alpha_i$ when $i<2k$ and $\alpha_{2k}$. 
What we can do instead is use the fact that, when $d_{i-2}=0$, we have
\begin{align*}
d_i&=\alpha_id_{i-1}\\
d_{i+1}&=\alpha_{i+1}d_i+d_{i-1}=(\alpha_{i+1}\alpha_i+1)d_{i-1}\\
\end{align*}
which is similar to how the recursion begins, multiplying by $d_{i-1}$ and replacing $(\alpha_1,\alpha_2)$ with $(\alpha_i,\alpha_{i+1})$. Thus, if $\alpha_1,\ldots,\alpha_{i}$ yield $d_{i}=0$ and $\bar{\alpha_1},\ldots,\bar{\alpha_j}$ imply $\bar{d_j}=0$, setting $\alpha_{i+k}=\bar{\alpha_k}$ implies $d_{i+j}=0$. As a way to apply this,
we can use $(\alpha_1,\alpha_2)=(1,-1)$ together with $(\bar{\alpha_1},\bar{\alpha_2},\bar{\alpha_3},\bar{\alpha_4})=(1,-1,\epsilon,0)$, with $\epsilon$ being any value we want to choose. 
This yields that threshold graphs with power sequences $[3,2,1,2,\epsilon,2]$ have nullity $1$. And repeatedly applying this construction,  we get that threshold graphs with power sequences of the form
\[
[3,2,1,2,1,2,1,2,1,2,1,2,\ldots,1,2,\epsilon,2]
\]
have nullity $1$.

\section{Trivially perfect graphs}~\label{sec: trivially perfect graphs}
In this section, we give sufficient conditions for a trivially perfect graph to have a nonsingular distance matrix. Let $G$ be a trivially perfect graph  and let $\mathcal{W}$ be a true twin partition of $G$. There exists a tree $T=(\mathcal W, E)$, called \emph{rooted clique tree of $G$}, such that if $W,W'\in \mathcal{W}$, $w\in W$ and $w'\in W'$, then $w$ and $w'$ are adjacent if and only if $W=W'$, or $W'$ is descendant of $W$ in $T$ or vice versa. By $T_W$, we denote the subtree of $T$ rooted at $W$ containing all descendants of $W$. 
The \emph{arrow matrix of $T$} is recursively defined  as follows. If $\mathcal{W}=\{R\}$, $A_T=|R|-1$. Assume that $|W|\ge 3$.

Let the elements of $\mathcal{W}$ be numbered as follows:
\begin{itemize}
\item if $i<j$ then
$W_i$ is not a descendant of $W_j$;
\item if $i<j<k$ and $W_k$ is a descendant of $W_i$, then $W_j$ is a descendant of $W_i$.
\end{itemize}
See Fig.~\ref{fig: tp graph}. Further, let $W_{h_1},W_{h_2},\ldots,W_{h_\ell}$ be the children of $R=W_1$, renumbered so that
if $i<j$, $W_i=W_{h_m}$ and $W_j=W_{h_n}$, then $h_m<h_n$. We define the \emph{arrow matrix} of $T$ as 
\[A_T=
\begin{pmatrix}
|R|+1&|W_2|&|W_3|&\cdots&|W_{\ell}|\\
|R|\\
|R|\\
\vdots&&\text{\huge $B_T$}\\
|R|\\
|R|\\
\end{pmatrix}
,\]
where
\[
B_T=\begin{pmatrix}
A_1&\0&\cdots& \0\\
\0 &A_2&\cdots&\0\\
\vdots&\vdots&\ddots&\vdots\\
\0&\0&\cdots&A_{\ell}\\
\end{pmatrix}
\]
where $A_i$ is the arrow matrix of $T(V_i)$. The ordering of $\mathcal W$ induced by the rows of $A_T$ is called an \emph{arrow ordering}.

\begin{theorem}\label{thm: nonsingular trivially perfect graphs}
	Let $G$ be a trivially perfect graph, having a true twin partition $\mathcal{W}=\{W_1,W_2,\ldots,W_k\}$ such that $|W_i|\ge 6$ for each $i=1,\ldots,k$, then $D(G)$ has an inverse, i.e.; $\eta(G)=0$. 
\end{theorem}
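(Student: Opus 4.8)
The plan is to combine Lemma~\ref{lem:matrizcociente} with an inductive determinant computation along the rooted clique tree $T$, organised by the arrow matrix $A_T$. First I would reduce the problem: since $\mathcal W$ is a true twin partition of $G$, Lemma~\ref{lem:matrizcociente} says that the nullity of $D(G)$ equals the nullity of the $k\times k$ matrix $D/\mathcal W$, so it suffices to prove $D/\mathcal W$ is nonsingular. A connected trivially perfect graph has diameter at most $2$, so for representatives $w_i\in W_i$ one has $d_G(w_i,w_j)\in\{1,2\}$, with $d_G(w_i,w_j)=1$ exactly when $W_i,W_j$ are comparable in $T$ (one a descendant of the other) and $=2$ otherwise. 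Writing $n_i=|W_i|$, $D=\mathrm{diag}(n_1,\dots,n_k)$ and $\mathbf n=(n_1,\dots,n_k)^{\top}$, a short computation gives
\[
D/\mathcal W \;=\; 2\,\mathbf 1\,\mathbf n^{\top}\;-\;\bigl(C_T D+I\bigr),
\]
where $C_T$ is the comparability matrix of $T$ with ones on the diagonal. Thus $D/\mathcal W$ is a rank-one perturbation of $-(C_TD+I)$; and since $W_1$ is comparable to every class, listing $\mathcal W$ in an arrow ordering makes $C_TD+I$ a nested arrow matrix (its first row/column built from $n_1$ and the sizes of the children of $W_1$, its diagonal blocks the corresponding arrow matrices of the subtrees hanging from those children, and zero elsewhere). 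This is $A_T$ up to the leaf diagonal entries, a $\pm2$ discrepancy absorbed by the row operations below; note also that distances inside a subtree coincide in $G$ and in the subtree, so those diagonal blocks genuinely carry the subtree data.

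Second, I would make the determinant explicit and recursive. By the matrix determinant lemma, $\det(D/\mathcal W)=(-1)^k\det(C_TD+I)\bigl(1-2\,\mathbf n^{\top}(C_TD+I)^{-1}\mathbf 1\bigr)$ whenever $C_TD+I$ is invertible (and one can always argue through the adjugate); expanding a Schur complement at the root of each subtree yields a recursion over $T$ for both $\det(C_TD+I)$ and the scalar $\mathbf n^{\top}(C_TD+I)^{-1}\mathbf 1$, equivalently certain weighted column sums of $\mathrm{adj}$. This is the branching analogue of the tridiagonal recursion $d_i=\alpha_i d_{i-1}+d_{i-2}$ used for threshold graphs: there one propagates the pair $(d_{i-1},d_i)$ up a path, whereas here one propagates, at every node of $T$, a pair consisting of a subtree determinant together with an auxiliary linear functional of the subtree's arrow matrix. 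Performing the row operations that subtract each non-root row from the row of its parent in the arrow ordering, followed by the usual sign changes, is what turns $D/\mathcal W$ into the exact arrow matrix $A_T$ and makes this recursion transparent.

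Third, I would run the induction on $T$. The base case is $G=K_{n_1}$, where $D/\mathcal W=n_1-1\ge 5\neq 0$. For the inductive step one fixes a joint invariant --- at each node the subtree determinant is nonzero with a sign determined by the shape of that subtree, and the auxiliary functional lies in an explicit interval bounded away from the single value that would make the Schur step at the parent vanish --- and checks it is preserved when an arbitrary family of subtrees is joined under a new root. The hypothesis $n_i\ge 6$ is exactly the slack needed for these inequalities to survive a node of arbitrary branching, and it cannot simply be dropped: for small clique sizes the distance matrix can be singular, e.g.\ $\det D\bigl(K_4\vee(K_3+K_3)\bigr)=0$. I expect this third step to be the main obstacle, because the recursion branches and the invariant must be engineered to close under ``adjoin a root to several independent subtrees'' rather than merely ``extend a path by one vertex''; carrying out the ensuing estimates uniformly in the shape of $T$ (and in the bound $n_i\ge 6$) is where the genuine work lies, whereas the reduction to $D/\mathcal W$ and the passage to $A_T$ are essentially mechanical.
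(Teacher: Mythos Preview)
Your setup is correct and closely mirrors the paper's. The identity $D/\mathcal W=2\,\mathbf 1\,\mathbf n^{\top}-(C_TD+I)$ holds, and the paper's first batch of row operations ($r_W-2r_R\to r_W$, then $-r_W\to r_W$ for every $W\neq R$) is precisely the elimination of the rank-one piece $2\,\mathbf 1\,\mathbf n^{\top}$ from all rows except the first, leaving the block-diagonal arrow structure you describe. From there the paper does not invoke the matrix determinant lemma or Schur complements abstractly; it continues with explicit row eliminations up the tree---at each internal node subtracting suitable multiples of the children's rows---until it reaches a lower-triangular matrix with nonzero diagonal. That is your ``Schur complement at each node'' written out by hand, so the two routes are essentially the same; your determinant-lemma packaging adds nothing but also costs nothing beyond having to control two quantities ($\det(C_TD+I)$ and the scalar $1-2\,\mathbf n^{\top}(C_TD+I)^{-1}\mathbf 1$) instead of a single triangular form.

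The genuine gap is your third step. You assert that the auxiliary functional ``lies in an explicit interval bounded away from the single value that would make the Schur step at the parent vanish,'' but you never name the invariant or verify the bound, and you acknowledge this is where the work lies. The paper makes it concrete: after the child-subtraction step, the row of each non-root $W$ carries a single scalar $m_W$ (its entry in the column of any proper ancestor $V\neq R$ is $m_W|V|$, and in the root column it is $m_W(|R|-2)$), and the invariant propagated by induction on the height $h(T_W)$ is simply $m_W\le -\tfrac12$. This bound is exactly what guarantees the next elimination multiplier $1/(1+m_V|V|)$ exists and that the new diagonal entry is again nonzero; the hypothesis $|V|\ge 6$ enters through $\frac{m_V}{m_V|V|+1}\le\frac{1}{|V|-2}\le\frac14$ for non-leaf children and $\frac{1}{|V'|+1}\le\frac14$ for leaf children, whence $m_{W'}\le 1-\tfrac34\,s_{W'}\le -\tfrac12$ once $W'$ has at least two children. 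So the paper's proof is your plan carried to completion, and what your proposal is missing is precisely this explicit invariant $m_W\le-\tfrac12$ together with its inductive verification.
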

As the proof of Theorem~\ref{thm: nonsingular trivially perfect graphs} is a bit technical, we give an illustration
of how it works before proceeding with the actual proof.
\subsection{Illustration of Theorem~\ref{thm: nonsingular trivially perfect graphs}}~\label{subsec: example}

Consider the trivially perfect graph $G=K_6\vee((K_7\vee(K_9+K_8))+(K_9\vee((K_8\vee(K_6+K_7))+K_6)))$ with the vertex set partition $\mathcal W=\{W_1,W_2,W_3,W_4$, $W_5,W_6\}$ (see Fig.~\ref{fig. trivially perfect example}),  whose rooted
clique tree appears on Figure \ref{fig: tp graph}. Notice that the quotient matrix $D/\mathcal W$ is

\[
\begin{pmatrix}
	6-1& 7  & 9&  8&  9&  8&  6&  7&  6\\
	6  & 7-1& 9&  8& 2\cdot 9& 2\cdot 8& 2\cdot 6& 2 \cdot 7& 2 \cdot 6\\
	6&  7 &  9-1& 2\cdot 8& 2\cdot 9& 2\cdot 8& 2\cdot 6& 2 \cdot 7& 2 \cdot 6\\
	6&  7& 2\cdot 9&  8-1& 2\cdot 9& 2\cdot 8& 2\cdot 6& 2 \cdot 7& 2 \cdot 6\\
	6& 2\cdot 7& 2\cdot 9& 2\cdot 8&  9-1&  8&  6&  7&  6\\
	6& 2\cdot 7& 2\cdot 9& 2\cdot 8&  9&  8-1&  6&  7& 2\cdot 6\\
	6& 2\cdot 7& 2\cdot 9& 2\cdot 8&  9&  8&  5& 2\cdot 7& 2\cdot 6\\
	6& 2\cdot 7& 2\cdot 9& 2\cdot 8&  9&  8& 2\cdot 6&  7-1& 2\cdot 6\\
	6& 2\cdot 7& 2\cdot 9& 2\cdot 8&  9& 2\cdot 8& 2\cdot 6& 2\cdot 7&  6-1\\
\end{pmatrix},
\]

where the $i$-th row represents $W_i$. We denote such a row by $r_{W_i}$.
\begin{figure}
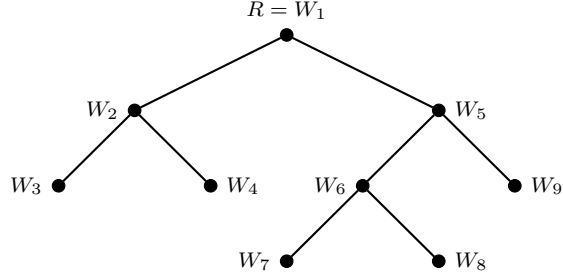
\label{fig. trivially perfect example}
	\begin{center}
\tikz{
	\node[circle,draw,fill,scale=.5,label=above:{\footnotesize $R=W_1$}] at (0,0) (r) {};
	\node[circle,draw,fill,scale=.5,label=left:{\footnotesize $W_2$}] at (-2,-1) (h1) {};
	\node[circle,draw,fill,scale=.5,label=right:{\footnotesize $W_5$}] at (2,-1) (h2) {};
	\node[circle,draw,fill,scale=.5,label=left:{\footnotesize $W_3$}] at (-3,-2) (h11) {};
	\node[circle,draw,fill,scale=.5,label=right:{\footnotesize $W_4$}] at (-1,-2) (h12) {};
	\node[circle,draw,fill,scale=.5,label=left:{\footnotesize $W_6$}] at (1,-2) (h21) {};
	\node[circle,draw,fill,scale=.5,label=right:{\footnotesize $W_9$}] at (3,-2) (h22) {};	
	\node[circle,draw,fill,scale=.5,label=left:{\footnotesize $W_7$}] at (0,-3) (h211) {};	
	\node[circle,draw,fill,scale=.5,label=right:{\footnotesize $W_8$}] at (2,-3) (h212) {};	

	\draw (r) [thick] to (h1);
	\draw (r) [thick] to (h2);
	\draw (h1) [thick] to (h11);
	\draw (h1) [thick] to (h12);
	\draw (h2) [thick] to (h21);
	\draw (h2) [thick] to (h22);
	\draw (h21) [thick] to (h211);
	\draw (h21) [thick] to (h212);
}
	\end{center}
	\caption{Rooted tree of $G=K_6\vee((K_6\vee(K_6+K_6))+(K_6\vee((K_6\vee(K_6+K_6))+K_6)))$; i.e., each $W_i$ is a clique with six vertices. Rooted tree of $G=K_6\vee((K_7\vee(K_9+K_8))+(K_9\vee((K_8\vee(K_6+K_7))+K_6)))$.}\label{fig: tp graph}
\end{figure}

Now  we apply on $D/\mathcal W$ the following elementary operations, first $r_{W_i}-2r_{W_1}\to r_{W_i}$  and then $-r_{W_i}\to r_{W_i}$, for each $i\geq 2$, obtaining the following matrix

\[M_1=
\begin{pmatrix}
5 & 7 & 9 & 8 & 9 & 8& 6& 7& 6\\
4 & 8 & 9 & 8 & 0 & 0& 0& 0& 0\\
4 & 7 & 10& 0 & 0 & 0& 0& 0& 0\\
4 & 7 & 0 & 9 & 0 & 0& 0& 0& 0\\
4 & 0 & 0 & 0 & 10& 8& 6& 7& 6\\
4 & 0 & 0 & 0 & 9 & 9& 6& 7& 0\\
4 & 0 & 0 & 0 & 9 & 8& 7& 0& 0\\
4 & 0 & 0 & 0 & 9 & 8& 0& 8& 0\\
4 & 0 & 0 & 0 & 9 & 0& 0& 0& 7\\
\end{pmatrix}.
\]

To make some more $0$'s we do the following row operations. First we subtract from the row corresponding to the root 
the rows corresponding to its children, i.e., $r_{W_1}-r_{W_2}-r_{W_5}\to r_{W_1}$. Do the same for
$r_{W_5}$, $r_{W_5}-r_{W_6}-r_{W_9}\to r_{W_5}$. This was done because $W_5$ has grandchildren (i.e. it has child who
has children of its own). This yields the matrix

\[M_2'=
\begin{pmatrix}
-3& -1& 0 & 0 & -1& 0& 0& 0& 0\\
4 & 8 & 9 & 8 & 0 & 0& 0& 0& 0\\
4 & 7 & 10& 0 & 0 & 0& 0& 0& 0\\
4 & 7 & 0 & 9 & 0 & 0& 0& 0& 0\\
-4& 0 & 0 & 0 & -8&-1& 0& 0&-1\\
4 & 0 & 0 & 0 & 9 & 9& 6& 7& 0\\
4 & 0 & 0 & 0 & 9 & 8& 7& 0& 0\\
4 & 0 & 0 & 0 & 9 & 8& 0& 8& 0\\
4 & 0 & 0 & 0 & 9 & 0& 0& 0& 7\\
\end{pmatrix}.
\]

We keep making $0$'s appear as follows. We take
every vertex that has children, but not grandchildren, and use them to make $0$'s. This means we do  $r_{W_2}-\frac{9}{10}r_{W_3}-\frac{8}{9}r_{W_4} \to r_{W_2} $ and $r_{W_6}-\frac{6}{7}r_{W_7}-\frac{7}{8}r_{W_8} \to r_{W_6}$. This gives the matrix

\[M_2=
\begin{pmatrix}
-3& -1& 0 & 0 & -1& 0& 0& 0& 0\\
\frac{-142}{45} & \frac{-407}{90} & 0 & 0 & 0 & 0& 0& 0& 0\\
4 & 7 & 10& 0 & 0 & 0& 0& 0& 0\\
4 & 7 & 0 & 9 & 0 & 0& 0& 0& 0\\
-4& 0 & 0 & 0 & -8&-1& 0& 0&-1\\
\frac{-41}{14} & 0 & 0 & 0 & \frac{-369}{56} & \frac{-34}{7}& 0& 0& 0\\
4 & 0 & 0 & 0 & 9 & 8& 7& 0& 0\\
4 & 0 & 0 & 0 & 9 & 8& 0& 8& 0\\
4 & 0 & 0 & 0 & 9 & 0& 0& 0& 7\\
\end{pmatrix}.
\]

We can do now something similar for $r_{W_5}$, although
we need to multiply $r_{W_6}$ by a different value. We do $r_{W_5}-\frac{7}{34}r_{W_6}\to r_{W_5}$ and then $r_{W_5} +\frac{1}{7}r_{W_9} \to r_{W_{5}}$.
Notice that in this case we have $\frac{7}{34}=\frac{34}{7}^{-1}=\frac{-1}{(M_2)_{6,6}}$, and $\frac{1}{7}=\frac{1}{|W_9|+1}$. This yields the matrix

\[N=
\begin{pmatrix}
-3& -1& 0 & 0 & -1& 0& 0& 0& 0\\
\frac{-142}{45} & \frac{-407}{90} & 0 & 0 & 0 & 0& 0& 0& 0\\
4 & 7 & 10& 0 & 0 & 0& 0& 0& 0\\
4 & 7 & 0 & 9 & 0 & 0& 0& 0& 0\\
\frac{-1345}{476}& 0 & 0 & 0 & \frac{-10201}{1904}& 0& 0& 0& 0\\
\frac{-41}{14} & 0 & 0 & 0 & \frac{-369}{56} & \frac{-34}{7}& 0& 0& 0\\
4 & 0 & 0 & 0 & 9 & 8& 7& 0& 0\\
4 & 0 & 0 & 0 & 9 & 8& 0& 8& 0\\
4 & 0 & 0 & 0 & 9 & 0& 0& 0& 7\\
\end{pmatrix}.
\]

Finally, we can do the same process for $r_{W_1}$, using $r_{W_2}$ and $r_{W_5}$. Thus we do  $r_{W_1}-\frac{90}{407}r_{W_2}\to r_{W_1}$ and then $r_{W_1}-\frac{1904}{10201}r_{W_5} \to r_{W_1}$. In this case, as neither $W_2$ nor $W_5$ were leaves, we are just using $\frac{-90}{407}=N_{2,2}^{-1}$ and $\frac{-1904}{10201}=N_{5,5}^{-1}$. Thus, we obtain the following lower triangular matrix, which is non-singular because it does not have any zeros in the main diagonal.

\[
\begin{pmatrix}
 \frac{-7368677}{4151807} & 0& 0 & 0 & 0& 0& 0& 0& 0\\[0.75ex]
\frac{-142}{45} & \frac{-407}{90} & 0 & 0 & 0 & 0& 0& 0& 0\\[0.75ex]
4 & 7 & 10& 0 & 0 & 0& 0& 0& 0\\[0.75ex]
4 & 7 & 0 & 9 & 0 & 0& 0& 0& 0\\[0.75ex]
\frac{-1345}{476}& 0 & 0 & 0 & \frac{-10201}{1904}& 0& 0& 0& 0\\[0.75ex]
\frac{-41}{14} & 0 & 0 & 0 & \frac{-369}{56} & \frac{-34}{7}& 0& 0& 0\\[0.75ex]
4 & 0 & 0 & 0 & 9 & 8& 7& 0& 0\\[0.75ex]
4 & 0 & 0 & 0 & 9 & 8& 0& 8& 0\\[0.75ex]
4 & 0 & 0 & 0 & 9 & 0& 0& 0& 7\\[0.75ex]
\end{pmatrix}.
\]

\subsection{Proof of Theorem~\ref{thm: nonsingular trivially perfect graphs}}\label{subsec: proof}
Before proceeding with the proof, we need to define the height of the vertices of a rooted tree.
This definition is done inductively. If $v$ has no children we define the \emph{height of $v$}
as $h(v)=0$. If $v$ has children, and the height of every child of $v$ has been defined, we define the \emph{height of $v$} as 
\[
h(v)=1+\max_{w|\text{$w$ is a child of $v$}}{h(w)}.
\]
Thus, for the vertices of the rooted tree in Figure~\ref{fig: tp graph} we have
\begin{align*}
h(W_3)=&h(W_4)=h(W_7)=h(W_8)=h(W_9)=0,\\
h(W_2)=&h(W_6)=1,\\
h(W_5)=&2,\\
h(W_1)=&3.
\end{align*}
We are ready now to present the prove Theorem~\ref{thm: nonsingular trivially perfect graphs}.
\begin{proof}[Proof of Theorem~\ref{thm: nonsingular trivially perfect graphs}]
	 Let $T=(\mathcal W, E)$ be a rooted clique tree of $G$. Consider now an arrow ordering $W_1,\ldots,W_{|\mathcal W}|$. The quotient matrix of $G$, under this ordering, has the following structure.
	 
	 \[D/\mathcal{W}=
	 \begin{pmatrix}
	 |R|-1      &\vec{x}_1^t& \vec{x}_2^t&\cdots & \vec{x}_k^t\\
	 |R|\cdot\1 & B_1 & 2\cdot\1\vec{x}_2^t&\cdots& 2\cdot\1\vec{x}_k^t\\
	 |R|\cdot\1 & 2\cdot\1 \vec{x}_1^t&B_2&\cdots& 2\cdot\1\vec{x}_k^t\\
	 \vdots     & \vdots              &\vdots&\ddots&\vdots\\
	 |R|\cdot\1 & 2\cdot\1 \vec{x}_1^t& 2\vec{x}_2^t\cdot\1&\cdots&B_k\\
	 \end{pmatrix}
	 ,\]
	 where $R$ is the root of $T$, $B_i$ is the quotient matrix of the distance matrix, induced by those vertices in $G$ belonging to some vertex of $T_{W_i}$, where $W_i$ is the $i$-th child of $R$ under the considered ordering of $\mathcal W$, and the vector $\vec{x}_i$ has $|W|$ in each entry corresponding to $W \in V (T_{W_i})$ for each $1\le i\le k$. Now  we apply on $D/\mathcal W$ the following elementary operations, first $r_W-2r_R\to r_W$ and then $ -r_W\to r_W$, for each $W\in\mathcal W\setminus\{R\}$, obtaining the following matrix	 

	 \[M_1=
	 \begin{pmatrix}
	 |R|-1&\vec{x}_1^t&\vec{x}_2^t&\cdots&\vec{x}_k^t\\
	 (|R|-2)\cdot\1&A_1&\0&\cdots&\0\\
	 (|R|-2)\cdot\1&\0&A_2&\cdots&\0\\
	 \vdots&\vdots&\vdots&\ddots&\vdots\\
	 (|R|-2)\cdot\1&\0&\0&\cdots&A_k\\
	 \end{pmatrix}
	 ,\]
	 
	 where the $A_i$'s are the arrow matrices of the subtrees $T_{W_i}$'s of $T$. We can transform $M_1$ into
	 
	 \[M_2=
	 \begin{pmatrix}
	 |R|-1&-\vec{a}_1^t&-\vec{a}_2^t&\cdots&-\vec{a}_k^t\\
	 \vec{b}_1&C_1&\0&\cdots&\0\\
	 \vec{b}_2&\0&C_2&\cdots&\0\\
	 \vdots&\vdots&\vdots&\ddots&\vdots\\
	 \vec{b}_k&\0&\0&\cdots&C_k\\
	 \end{pmatrix}
	 ,\]
	 such that, for each $i$, the first entry of $\vec{b}_i$ is $k_i(|R|-2)$ with $k_i\le \frac {-1} 2$,
	 \[C_i=
	 \begin{pmatrix}
	 1+|W_i|&-\vec{d}_1^t&-\vec{d}_2^t&\cdots&-\vec{d}_\ell^t\\
	 \vec{c}_1^i&C_1^i&\0&\cdots& \0\\
	 \vec{c}_2^i&\0 &C_2^i&\cdots&\0\\
	 \vdots&\vdots&\vdots&\ddots&\vdots\\
	 \vec{c}_{\ell}^i&\0&\0&\cdots&C_{\ell}^i\\
	 \end{pmatrix}
	 ,\]
	 and $\vec{a}_i$ stands for the vector having as many rows as $B_i$, a $1$ in the first column and $0$'s in the rest of its entries.
	The vector $\vec{d}_i$ has as many rows as $C_j^i$, a $1$ in the first columns and $0$'s in the rest of its entries. The block $C_j^i$ is a lower matrix, $(C_j^i)_{11}=(1+m_j)|S_j|$ with $m_j\le\frac{-1} 2$ and $S_j$ is the child of $W_i$ corresponding to the first row of $C_j^i$, and the  first entry of  $\vec{c}_j^i$ is $m_j|S_j|$. 
	
	 We will prove that there exists a sequence of elementary row operations leading $M_1$ to $M_2$. First, we do $r_R-\sum_ W r_W\to r_R$, the sum is taken among all vertices $W\in V(T)$ such that $W$ is a child of $R$. We repeat this procedure on each $T_W$ such that $h(T_W)\ge 2$  and $W$ is a child of $R$. Then we proceed with every child of the $T_W$'s and so on as long as possible. Let us call this new matrix $M_2^\prime$. Notice that entries of $M_2^\prime$ have been modified according to $M_2$ as follow $(M_2^\prime)_{WV}=|W|+1-\alpha_W |W|$ for each $W=V$ or $W$ ancestor of $V$, where $\alpha_W$ is the number of children of $W$ on $T_W$; and $(M_2^\prime)_{RR}=|R|-1-\alpha_R (|R|-2)$, where $\alpha_R$ is the number of children of $R$ on $T$.
	 We proceed by applying induction on $h(T_W)$, the height of $T_W$.  
	 
	 Base case: $h(T_W)=1$. We do $ r_W -\sum_{V}\frac{|V|}{|V|+1}\cdot r_V \to r_W$, the sum is taken over all children $V$ of $W$. Under this row operation we obtain a matrix $N$ such that $N_{WV}=0$ for every descendant $V$ of $W$, $N_{WW}=|W|+1-\alpha_W |W|$, $N_{WV}=|V|-\alpha_W |V|$  for each $V$ ancestor of $W$ distinct of $R$, and $N_{WR}=|R|-2-\alpha_W (|R|-2)$. Thus $N_{VV}=1+m_W |V|$, $N_{WV}=m_W|V|$ for each ancestor $W$ distinct of $R$ and $N_{WR}=m_W(|R|-2)$, where $m_W=1-s_W+\sum_V\frac 1 {|V|+1}$ and $s_W$ is the number of children of $W$. Hence, since $|V|\ge 6>3$,  $m_W< 1-s_W+\frac{s_W} 4$. Therefore, $s_W\ge 2$ implies $m_W<\frac{-1}{2}$. 
	 
	 Assume now, by inductive hypothesis, that we can obtain a matrix $N$ from $M'_2$, by means of elementary rows operations such that if $1\le h(T_W)<k<h(T_R)$ with $W\neq R$, $N_{WV}=0$ for each descendant $V$ of $W$, $N_{WW}=1+m_W|W|$ and $N_{WV}=m_V |V|$ with $m_V\le\frac{-1} 2$ for each ancestor $V$ of $W$ distinct of $R$, and $N_{WR}=m_R(|R|-2)$ with $m_R\le\frac{-1}{2}$. These are the only entries modified concerning to $M'_2$. Let $W'$ be a vertex of $T$ such that $1<h(T_{W'})=k$. We modify row $W'$ according to  $r_W'+\sum_{V}\frac{1}{m_V|V|+1}r_V\to r_{W'}$, where the sum is taken over all children $V$ of $W'$ such that $h(T_V)\ge 1$; and then we do $r_{W'}+\sum_{V'}\frac{1}{|V'|+1}\cdot r_{V'}\to r_{W'}$, the sum is taken over all children $V'$ of $W'$ such that $h(T_{V'})=0$. Hence the new matrix $N'$ satisfies
	 
	 \begin{align*}
			 N'_{W'W'}&=\left(|W'|+1-s_{W'}|W'|+\sum_{h(T_V)\ge 1}\frac{m_V |W'|}{m_V|V|+1}+\sum_{h(T_{V'})=0}\frac{|W'|}{|V'|+1}\right)\\
			 &\le 1+|W'|\left(1-s_{W'}+\sum_{h(T_V)\ge 1}\frac{1}{|V|-2}+\sum_{h(T_{V'})=0}\frac{1}{|V'|+1}\right)\\
			 &\le 1+|W'|\left(1-\frac 3 4 s_{W'}\right).\\
	 \end{align*}
		By the inductive hypothesis, $m_V\le \frac{-1}{2}$ for each $V$ child of $W'$ such that $h(T)<k$ and thus the first inequality holds. The last one follows from $|V|\ge 6$ for each vertex $V$ of $T$. We conclude that $N_{W'W'}<0$. Using the inductive hypothesis and reasoning as in the base case, it follows that $N'_{WV}=0$ for each descendant $V$ of $W$ and $N'_{WV}=m_V|V|$ with $m_V\le\frac{-1}2$ for each ancestor $V$ of $W$ distinct of $R$, and $M_{WR}=(|R|-2)m_R$. In particular, the result holds for each child $W$ of $R$. Hence $M_2$ can be obtained from $D/\mathcal{W}$ through elementary row operations. 
	
		Finally, use the same strategy as in the inductive hypothesis to prove our result. We can prove that, if we do $r_R+\sum_{V}\frac{1}{m_V|V|+1}r_V\to r_{R}$, where the sum is taken over all children $V$ of $R$ such that $h(T_V)\ge 1$; and then we do  $r_{R}+\sum_{V'}\frac{1}{|V'|+1}\cdot r_{V'} \to r_{R}$, where the sum is taken over all children $V'$ of $R$, we obtain a lower matrix whose main diagonal has no zero entry. 
\end{proof}

\subsection{Nullity}

Trivially perfect graphs are a superclass of threshold graphs, but, unlike threshold graphs, for every $k\ge 2$ there exists a trivially pefect graph with nullity $k$.

\begin{theorem}\label{thm: nullity} 
	Let $N=3k+r$, where $r\in\{1,2,3\}$ and $k\ge 2$ is an integer. Let $n \in \mathbb{N}$ with $n\geq \textcolor{magenta}{7}k+r$.
	Then, there exists a trivially perfect graph $G$ that has a true twin partition into $N$ sets and $|V(G)|=n$ such that the distance matrix of $G$ has nullity $\ell$, where
	\begin{itemize}
	\item $\ell=k-1$ if $n= 7k+r$ or if $r=1$ and $n\ge 7k+3$,
	\item $\ell=k$ if $r=1$ and $n=7k+2$,
	\item $\ell\in \{k-1,k\}$ otherwise.
	\end{itemize} 
\end{theorem}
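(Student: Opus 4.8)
The plan is to build, for each admissible triple $(k,r,n)$, an explicit trivially perfect graph out of a root clique, $k$ copies of a fixed $7$‑vertex ``nullity gadget'', and $r-1$ pendant cliques, and then to compute its distance nullity by applying to $D/\mathcal W$ the row operations from the proof of Theorem~\ref{thm: nonsingular trivially perfect graphs}. The gadget is $H=K_3\vee 2K_2$, whose canonical true twin partition has the three classes $A$ (the $K_3$), $B$, $C$ (the two $K_2$'s) and whose rooted clique tree is a cherry rooted at $A$. For $k\ge 2$, $r\in\{1,2,3\}$ and $n\ge 7k+r$, set
\[
G=K_\rho\vee\bigl(H_1+\cdots+H_k+L_1+\cdots+L_{r-1}\bigr),
\]
with each $H_i$ a copy of $H$, each $L_j$ a complete graph, and $\rho\ge1$, $|L_j|\ge 1$ chosen so that $\rho+7k+\sum_j|L_j|=n$; this is possible since $n\ge 7k+1+(r-1)$. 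One checks that $G$ is connected and trivially perfect and that, because $k\ge2$, no non‑root vertex is universal, so the canonical true twin partition $\mathcal W$ has exactly $1+3k+(r-1)=N$ classes: the root $R=K_\rho$, the three classes $A_i,B_i,C_i$ of each $H_i$, and the cliques $L_j$; its rooted clique tree has root $R$ with children $A_1,\dots,A_k,L_1,\dots,L_{r-1}$, and each $A_i$ carrying the leaf children $B_i,C_i$. By Lemma~\ref{lem:matrizcociente}, $\eta(G)=\eta(D/\mathcal W)$.

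Following the reduction in the proof of Theorem~\ref{thm: nonsingular trivially perfect graphs}, choose an arrow ordering of $\mathcal W$ and apply $r_W-2r_R\to r_W$, then $-r_W\to r_W$, for every $W\neq R$; this turns $D/\mathcal W$ into
\[
M_1=\begin{pmatrix}\rho-1&\vec x^{\,t}\\[2pt](\rho-2)\1&\operatorname{diag}\bigl(Q_1,\dots,Q_k,\,|L_1|+1,\dots,|L_{r-1}|+1\bigr)\end{pmatrix},\qquad Q_i=\begin{pmatrix}4&2&2\\3&3&0\\3&0&3\end{pmatrix},
\]
where $Q_i$ is the block coming from the subtree $T_{A_i}$ and $\vec x$ lists the class sizes in order. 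A direct computation gives $\det Q_i=0$ and $\operatorname{rank}Q_i=2$ (this is precisely why the $7$‑vertex choice $H=K_3\vee2K_2$ works, and $7$ is minimal for such a cherry gadget), so the block‑diagonal part of $M_1$ has rank $2k+(r-1)$.

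Next I would analyse the bordering first row and first column of $M_1$. One checks that neither $(3,2,2)$ nor $\1_3$ lies in the row space, resp.\ column space, of $Q_i$; hence, when $\rho\neq2$, the first row $(\rho-1,\vec x^{\,t})$ is not in the row space and the first column $(\rho-1,(\rho-2)\1^{\,t})^{\,t}$ is not in the column space of the block‑diagonal part, and by the elementary fact that such a double bordering raises the rank by exactly $2$ we get $\operatorname{rank}M_1=2k+r+1$, i.e.\ $\eta(G)=N-\operatorname{rank}M_1=k-1$. When $\rho=2$ the first column of $M_1$ becomes $e_1$, so $\operatorname{rank}M_1=1+(2k+r-1)=2k+r$ and $\eta(G)=k$.

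It remains to choose $\rho$ and read off the three cases. If $r=1$ there are no $L_j$'s, so $\rho=n-7k$ is forced; this is $\neq2$ exactly when $n\neq7k+2$, which yields $\eta(G)=k-1$ when $n=7k+1$ or $n\ge7k+3$, and $\eta(G)=k$ when $n=7k+2$. If $r\in\{2,3\}$ and $n=7k+r$, then $\rho=1$ and all $|L_j|=1$ are forced, giving $\eta(G)=k-1$; and if $r\in\{2,3\}$ and $n>7k+r$, one can keep $\rho=1$ (absorbing the surplus vertices into $L_1$) to get $\eta(G)=k-1$, or take $\rho=2$ to get $\eta(G)=k$, so $\eta(G)\in\{k-1,k\}$. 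The hard part is the rank bookkeeping in the third paragraph: one must establish carefully that the bordering row and column together contribute $+2$ to the rank when $\rho\neq2$ and $+1$ when $\rho=2$, which reduces to the two explicit non‑containments for $Q_i$ plus the general bordered‑matrix rank lemma; everything else is routine verification.
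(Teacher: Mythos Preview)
Your proposal is correct and follows essentially the same approach as the paper: you build the identical graph (root clique $K_\rho$ joined to $k$ copies of the gadget $K_3\vee 2K_2$ and $r-1$ pendant cliques), apply the same reduction to the matrix $M_1$ via Lemma~\ref{lem:matrizcociente} and the row operations from the proof of Theorem~\ref{thm: nonsingular trivially perfect graphs}, and then read off the nullity according to whether $\rho=2$. The only difference is cosmetic: where the paper continues with further explicit row operations to exhibit $k$ (or $k-1$) proportional rows in $\hat M$, you instead invoke a bordered-matrix rank argument, checking that $(3,2,2)$ and $\1_3$ lie outside the row and column spaces of $Q_i$; both routes yield the same rank count and the same case split on $\rho$.
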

\begin{proof}
Let $N$ be an integer number such that $N=3k+r$ with $r= 1,2 \ or \ 3$ and $k\ge2$.

Let $G$ be a trivially perfect graph with $|V(G)|=n\ge\textcolor{magenta}{7}k+r$, having a true twin partition $\mathcal{W}=\{R_1, \cdots, R_r, W_1,W_{1,1}, W_{1,2},W_2,W_{2,1}, W_{2,2},\ldots,W_k,W_{k,1}, W_{k,2}\}$ such that
\begin{itemize}
 \item $|W_i|=3$ for $1\le i \le k$,
 \item $|W_{i,1}|=|W_{i,2}|=2$ for $1\le i \le k$,
 \item $|R_i|\ge 1$ for $1\le i \le r$,
 \item $|R_1|+\cdots+|R_r|=n-7k$.
\end{itemize}
Let $T=(\mathcal W, E)$ a rooted clique tree of $G$, where 
\begin{itemize}
 \item $R_1$ is the root,
 \item $R_i$ is descendant of $R_1$ for $ 2 \le i\le r$, if $r\ge2$,
 \item $W_i$ is descendant of $R_1$ for $ 1 \le i\le k$,
 \item $W_{i,1}$ and $W_{i,2}$ are descendants of $W_i$ for $1 \le i\le k$.
\end{itemize}

Consider the distance matrix $D$ of $G$, Lemma \ref{lem:matrizcociente} allows us to use $D/ \mathcal W$ instead of $D$ to study its nullity. Using the same transformations as in the proof of Theorem \ref{thm: nonsingular trivially perfect graphs}, we obtain the matrix
\[M=
	 \begin{pmatrix}
	 |R_1|-1&\vec{x}_1^t&\vec{x}_2^t&\cdots&\vec{x}_k^t\\
	 (|R_1|-2)\cdot\1&A_1&\0&\cdots&\0\\
	 (|R_1|-2)\cdot\1&\0&A_2&\cdots&\0\\
	 \vdots&\vdots&\vdots&\ddots&\vdots\\
	 (|R_1|-2)\cdot\1&\0&\0&\cdots&A_k\\
	 \end{pmatrix}
	 ,\]
if $r=1$, or  
\[M=
	 \begin{pmatrix}
	 |R_1|-1          & |R_2| & \cdots & |R_r|& \vec{x}_1^t&\vec{x}_2^t&\cdots&\vec{x}_k^t\\
	 |R_1|-2 & |R_2|+1 & \0^t & 0 & \0^t& \0^t &\cdots& \0^t\\
	 \vdots  & \0 & \ddots & \0 & \0^t& \0^t &\cdots& \0^t\\
	 |R_1|-2 & 0  &  \0^t  &|R_r|+1& \0^t& \0^t &\cdots& \0^t\\
    (|R_1|-2)\cdot\1 & \0 &\cdots& \0 & A_1&\0&\cdots&\0\\
	 (|R_1|-2)\cdot\1 & \0 &\cdots& \0 & \0 &A_2&\cdots&\0\\
	 \vdots           &\vdots & \vdots & \vdots &\vdots&\vdots&\ddots&\vdots\\
	 (|R_1|-2)\cdot\1 & \0 &\cdots& \0 & \0&\0&\cdots&A_k\\
	 \end{pmatrix}
	 ,\]
if $r\ge2$, where 
$$
A_i=
\begin{pmatrix}
|W_i|+1 & |W_{i,1}| & |W_{i,2}|\\ 
|W_i| & |W_{i,1}|+1 & 0\\
|W_i| & 0 & |W_{i,2}|+1
\end{pmatrix}
=
\begin{pmatrix}
4 & 2 & 2\\ 
3 & 3 & 0\\
3 & 0 & 3
\end{pmatrix},
$$
and 
$$\vec{x}_i^t = (|W_i|, |W_{i,1}|, |W_{i,2}|)=(3,2,2) ,$$
for $1\le i\le k$. 

By elementary row operations, we obtain
\[\hat{M}=
	 \begin{pmatrix}
	 \hat{R} &\vec{\hat{x}}^t&\vec{\hat{x}}^t&\cdots&\vec{\hat{x}}^t\\
	 (|R_1|-2)\cdot\vec{\hat{y}}&\hat{A}&\0&\cdots&\0\\
	 (|R_1|-2)\cdot\vec{\hat{y}}&\0&\hat{A}&\cdots&\0\\
	 \vdots&\vdots&\vdots&\ddots&\vdots\\
	 (|R_1|-2)\cdot\vec{\hat{y}}&\0&\0&\cdots&\hat{A}\\
	 \end{pmatrix}
	 ,\]
if $r=1$, or  
\[\hat{M}=
	 \begin{pmatrix}
	   \hat{R}  & 0 & \cdots & 0& \vec{\hat{x}}^t&\vec{\hat{x}}^t&\cdots&\vec{\hat{x}}^t\\
	 |R_1|-2 & |R_2|+1 & \0^t & 0 & \0^t& \0^t &\cdots& \0^t\\
	 \vdots  & \0 & \ddots & \0 & \0^t& \0^t &\cdots& \0^t\\
	 |R_1|-2 & 0  &  \0^t  &|R_r|+1& \0^t& \0^t &\cdots& \0^t\\
    (|R_1|-2)\cdot\vec{\hat{y}} & \0 &\cdots& \0 & \hat{A}&\0&\cdots&\0\\
	 (|R_1|-2)\cdot\vec{\hat{y}} & \0 &\cdots& \0 & \0 &\hat{A}&\cdots&\0\\
	 \vdots           &\vdots & \vdots & \vdots &\vdots&\vdots&\ddots&\vdots\\
	 (|R_1|-2)\cdot\vec{\hat{y}} & \0 &\cdots& \0 & \0&\0&\cdots&\hat{A}\\
	 \end{pmatrix}
	 ,\]
if $r\ge2$, where 
$$
\hat{R}=
\begin{cases}
 (|R_1|-1) - \frac{4k}{3}\cdot(|R_1|-2) & \text{if } r=1\\
 (|R_1|-1) - (\frac{4k}{3} + \frac{|R_2|}{|R_2|+1}+\cdots+\frac{|R_r|}{|R_r|+1})\cdot(|R_1|-2) & \text{if } r\ge2,
\end{cases}
$$
$$
\hat{A}=
\begin{pmatrix}
0 & 0 & 0\\ 
3 & 3 & 0\\
3 & 0 & 3
\end{pmatrix}, \ \
\vec{\hat{x}}^t = (-1,0,0), \text{ and} \ \
\vec{\hat{y}}^t = \left(-\frac{1}{3},1,1\right).$$
 
Notice that the $(r+1+3i)$-th row corresponds to the first row of $\vec{\hat{y}}$ and $\hat{A}$, and thus it equals
\[
\begin{pmatrix}
-\frac{|R_1|-2}{3}&0&0&\cdots &0
\end{pmatrix}.
\]
Thus, the nullity of $\hat{M}$ is at least $k$ if $|R_1|=2$, and at least $k-1$ otherwise.
Furthermore, it is easy to check that the rest of the rows form a linearly independent set,
and that this set does not generate the $(r+1+3i)$-th row if $|R_1|\neq 2$.
The result now follows from the fact that $1\leq |R_1|\leq n-7k-r+1$, and that
if $r=1$, then $|R_1|=n-7k-r+1$.
\end{proof}

In the proof of Theorem \ref{thm: nullity}, we assign values to $(|W_i|, |W_{i,1}|, |W_{i,2}|)$ so that $A_i$ has nullity $1$, for all $1\le i \le k$. We can see that the matrix $A_i$ has nullity $1$ if and only if $(|W_i|, |W_{i,1}|, |W_{i,2}|)$ is one of $(2,3,3)$, $(2,2,5)$, $(2,5,2)$, $(3,2,2)$, $(3,1,5)$, $(3,5,1)$, $(4,1,3)$, $(4,3,1)$, $(6,1,2)$, and $(6,2,1)$, for $1\le i \le k$. In particular we use $(3,2,2)$ because with this choice we obtain the minimum lower bound for the number of vertices.

\section{Conclusion and further research}\label{sec: conclusions}

The proof of Theorem~\ref{thm: finite number of graphs with rank k} presents an upper bound for the number of graphs with distance rank equals $k$ in terms of the Ramsey number $R(k)$. Nevertheless, this upper bound seems to be far from being tight. Indeed, $\lfloor f(3,R(3))\rfloor= 186$, and the number of connected graphs with distance rank $3$ is equal to three. It would be interesting to find a tighter upper bound for the number of connected graphs with distance rank $k$. In Theorem~\ref{thm: nulity of threshold graphs}, we prove that a connected threshold graph has nullity at most one. We also present a family of infinite power sequences giving place to an infinite family of connected threshold graphs with nullity one. A challenging problem is characterizing those connected threshold graphs with nullity equal to zero or one. Unlike threshold graphs, for each integer $k\ge 2$ there exists a trivially perfect graph with nullity equal to $k$, see Theorem~\ref{thm: nullity}. Notice that, Theorem~\ref{thm: nonsingular trivially perfect graphs} guarantees that if each set of the twin partition of a trivially perfect graph is big enough, then its distance matrix is nonsingular. Consequently, connected threshold graphs with nullity one have a small set in their twin partition, as they are a subclass of trivially perfect graphs. 

\section*{Acknowledgments}

Ezequiel Dratman and Luciano N. Grippo acknowledge partial support from ANPCyT PICT 2017-1315. The first two authors and Ver\'onica Moyano were partially supported from Universidad Nacional de General Sarmiento, grant UNGS-30/1135.
Adri\'{a}n Pastine ackowledges partial suppport from Universidad Nacional de San Luis, Argentina, grants PROICO 03-0918
and PROIPRO 03-1720, and from ANPCyT grants PICT-2020-SERIEA-04064 and PICT-2020-SERIEA-00549.

This article was conceived during a visit of the fourth author to Universidad Nacional de General Sarmiento and he would like to thank them for their hospitality.


\begin{thebibliography}{10}

\bibitem{BapatKirklandandNeumann2005}
R.~Bapat, S.~J. Kirkland, and M.~Neumann.
\newblock On distance matrices and {L}aplacians.
\newblock {\em Linear Algebra Appl.}, 401:193--209, 2005.

\bibitem{ChengandLiu2007}
C.~Bo and B.~Liu.
\newblock On the nullity of graphs.
\newblock {\em Electron. J. Linear Algebra}, 16:60--67, 2007.

\bibitem{CHY2011}
G.~J. Chang, L.-H. Huang, and H.-G. Yeh.
\newblock A characterization of graphs with rank 4.
\newblock {\em Linear Algebra Appl.}, 434(8):1793--1798, 2011.

\bibitem{CHY2012}
G.~J. Chang, L.-H. Huang, and H.-G. Yeh.
\newblock A characterization of graphs with rank 5.
\newblock {\em Linear Algebra Appl.}, 436(11):4241--4250, 2012.

\bibitem{Corneil81}
D.~G. Corneil, H.~Lerchs, and L.~S. Burlingham.
\newblock Complement reducible graphs.
\newblock {\em Discrete Appl. Math.}, 3(3):163--174, 1981.

\bibitem{Dratmanetal2021}
E.~Dratman, L.~N. Grippo, M.~D. Safe, C.~M. da~Silva, Jr., and R.~R.
  Del-Vecchio.
\newblock The determinant of the distance matrix of graphs with blocks at most
  bicyclic.
\newblock {\em Linear Algebra Appl.}, 614:437--454, 2021.

\bibitem{Gol78}
M.~C. Golumbic.
\newblock Trivially perfect graphs.
\newblock {\em Discrete Math.}, 24(1):105--107, 1978.

\bibitem{Golumbic2004}
M.~C. Golumbic.
\newblock {\em Algorithmic graph theory and perfect graphs}, volume~57 of {\em
  Annals of Discrete Mathematics}.
\newblock Elsevier Science B.V., Amsterdam, second edition, 2004.
\newblock With a foreword by Claude Berge.

\bibitem{GrahamLovasz1978}
R.~L. Graham and L.~Lov\'{a}sz.
\newblock Distance matrix polynomials of trees.
\newblock In {\em Probl\`emes combinatoires et th\'{e}orie des graphes
  ({C}olloq. {I}nternat. {CNRS}, {U}niv. {O}rsay, {O}rsay, 1976)}, volume 260
  of {\em Colloq. Internat. CNRS}, pages 189--190. CNRS, Paris, 1978.

\bibitem{GraphamandPollak1973}
R.~L. Graham and H.~O. Pollak.
\newblock On the addressing problem for loop switching.
\newblock {\em Bell System Tech. J.}, 50:2495--2519, 1971.

\bibitem{Merris1994}
R.~Merris.
\newblock Laplacian matrices of graphs: a survey.
\newblock volume 197/198, pages 143--176. 1994.
\newblock Second Conference of the International Linear Algebra Society (ILAS)
  (Lisbon, 1992).

\bibitem{Ramsey1929}
F.~P. Ramsey.
\newblock On a {P}roblem of {F}ormal {L}ogic.
\newblock {\em Proc. London Math. Soc. (2)}, 30(4):264--286, 1929.

\bibitem{Spencer1975}
J.~Spencer.
\newblock Ramsey's theorem---a new lower bound.
\newblock {\em J. Combinatorial Theory Ser. A}, 18:108--115, 1975.

\bibitem{west2001}
D.~B. West.
\newblock {\em Introduction to graph theory}.
\newblock Prentice Hall, Inc., Upper Saddle River, NJ, 2001.

\bibitem{Chang96}
J.-H. Yan, J.-J. Chen, and G.~J. Chang.
\newblock Quasi-threshold graphs.
\newblock {\em Discrete Appl. Math.}, 69(3):247--255, 1996.

\end{thebibliography}
\end{document}